\documentclass[11pt]{article}
\usepackage[a4paper, margin=1.2in]{geometry}
\usepackage{setspace}
\usepackage{amsmath,amssymb,amsthm,mathtools}
\usepackage{enumitem}
\usepackage{xcolor}
\usepackage{microtype}
\usepackage[colorlinks=true,linkcolor=blue,citecolor=blue,urlcolor=blue]{hyperref}
\usepackage{comment}
\usepackage{tikz}
\usetikzlibrary{calc, positioning, decorations.pathmorphing, arrows.meta, shapes.geometric, shadows, backgrounds}
\pgfdeclarelayer{background}
\pgfsetlayers{background,main}
\allowdisplaybreaks
\definecolor{jctbBlue}{RGB}{70, 130, 180}     
\definecolor{jctbOrange}{RGB}{255, 127, 80}   
\definecolor{pathBlack}{RGB}{40, 40, 40}      
\definecolor{logicRed}{RGB}{190, 30, 45}      
\definecolor{bgBlue}{RGB}{245, 248, 253}      
\definecolor{bgOrange}{RGB}{254, 250, 245}    

\theoremstyle{plain}
\newtheorem{theorem}{Theorem}[section]
\newtheorem{lemma}[theorem]{Lemma}
\newtheorem{proposition}[theorem]{Proposition}
\newtheorem{claim}[theorem]{Claim}
\newtheorem{corollary}[theorem]{Corollary}

\theoremstyle{definition}
\newtheorem{definition}[theorem]{Definition}
\newtheorem{problem}[theorem]{Problem}

\theoremstyle{remark}

\newcommand{\dist}{\operatorname{dist}}

\newcommand{\E}{\mathbb E}
\newcommand{\Prb}{\mathbb P}

\title{Induced subdivisions in graphs of large girth}
\author{Peiru Kuang\footnote{School of Mathematical Sciences, Shanghai Jiao Tong University, Shanghai 200240, China. Email: peiru\_k@sjtu.edu.cn} 
\and 
Yan Wang\footnote{School of Mathematical Sciences, Shanghai Jiao Tong University, Shanghai 200240, China. Supported by National Key R\&D Program of China under Grant No. 2022YFA1006400 and National Natural Science Foundation of China under Grant No. 12571376. Email: yan.w@sjtu.edu.cn (corresponding author).}}

\date{}

\begin{document}
\maketitle
\begin{abstract}
In this paper, we prove that there exists an absolute constant $g_0$ such that, for every integer $k\ge 3$, every graph $G$ with $\delta(G)\ge k$ and $g(G)\ge g_0$ contains an induced subdivision of $K_{k+1}$. This fully resolves a problem raised by K\"{u}hn and Osthus (originally attributed to Shi), and improves a recent result of Gir\~{a}o and Hunter. Our proof uses some ideas from Gir\~{a}o and Hunter. Another main ingredient in our proof is an induced variant of Mader's theorem: for every fixed \(s,\eta,D\), every graph \(J\) with \(\Delta(J)\le D\), \(d(J)>s-2+\eta\) and sufficiently large girth contains an induced subdivision of \(K_s\).
\end{abstract}

\section{Introduction}
A subdivision of a graph \(H\) is a graph obtained from \(H\) by replacing each edge by an internally vertex-disjoint path. Subdivisions have played an important role in the connections between graph theory and topology; for example, Kuratowski's theorem \cite{Kuratowski1930} characterizes planar graphs by excluding subdivisions of $K_5$ or $K_{3,3}$.

A fundamental extremal question asks how many edges are needed to force a subdivision of a complete graph. In 1967, Mader \cite{Mader1967} proved that, for every integer \(r\), there is a positive number \(d(r)\) such that every graph of average degree at least \(d(r)\) contains a subdivision of \(K_r\). The correct order of \(d(r)\) is quadratic: complete bipartite graphs give the lower bound (first observed by Jung \cite{Jung1970}), while Bollob\'as and Thomason \cite{BollobasThomason1998} and, independently, Koml\'os and Szemer\'edi \cite{KomlosSzemeredi1994,KomlosSzemeredi1996} proved the matching upper bound up to a constant. 

The induced analogue of this problem is rather different. Finding induced subdivisions is inherently more difficult than ordinary ones. We must ensure that the paths are internally vertex-disjoint, and also forbid any additional edges between vertices that are not adjacent in the subdivision. Indeed, K\"uhn and Osthus \cite{KuhnOsthusInduced2004} observed that Mader's theorem fails for induced subdivisions unless dense bipartite subgraphs are excluded. Recent work of Gir\~ao and Hunter~\cite{GiraoHunter2025IMRN} also shows that,
in \(K_{s,s}\)-free graphs, polynomial average degree forces an induced
subdivision of any fixed graph. To avoid such dense subgraphs, a natural approach is to impose some local sparsity conditions, such as a large girth condition. 

The existence of finite $r$-regular graphs of arbitrarily large girth was proved by Sachs~\cite{Sachs1963}. Remarkably, his construction yields graphs that exhibit highly structured global properties, such as being Hamiltonian and having a 2-factor. Despite this local sparsity, a minimum-degree condition imposes substantial global structure. Indeed, for every \(k\), Thomassen~\cite{Thomassen1983} proved that every graph with minimum degree at least \(3\) and sufficiently large girth contains a \(K_k\)-minor. For subdivisions, a classical result of Mader \cite{Mader1967} implies that a sufficiently large minimum degree (dependent only on $r$) forces a subdivision of $K_{r+1}$. Later, Mader \cite{Mader1998} showed that $\delta(G)\ge r$ suffices.

\begin{theorem}[Mader \cite{Mader1998}]\label{Mader1}
For every $r$, there is a function $g(r)$ such that every graph with $\delta(G)\ge r$ and girth at least $g(r)$ contains a subdivision of $K_{r+1}$.
\end{theorem}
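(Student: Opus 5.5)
Here is the plan: reduce to an average-degree statement via a local expansion argument, then build the $K_{r+1}$ subdivision by linking branch vertices.

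\textbf{Step 1 (local expansion).} Let $G$ have $\delta(G)\ge r$ and girth $g\ge g(r)$. Set $R=\lfloor (g-1)/2\rfloor$. For every vertex $v$, the ball $B(v,R)$ induces a tree. In this tree every non-leaf vertex other than the root has at least $r-1$ children, and the root has at least $r$. Hence the sphere $S(v,t)$ has at least $r(r-1)^{t-1}$ vertices, so balls grow exponentially as long as $r\ge 3$. The cases $r\le 2$ are trivial: a vertex when $r=0$, an edge when $r=1$, a cycle when $r=2$.

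\textbf{Step 2 (choosing branch vertices).} I would pick $r+1$ branch vertices $v_0,\dots,v_r$ that are pairwise far apart, say at distance at least $4L$ for a suitable $L\ll R$. If that is impossible, contract the balls to pass to a minor. Concretely, take a maximal set $X$ of vertices pairwise at distance more than $2L$. The balls $B(x,L)$ are disjoint trees, and every vertex lies within $2L$ of $X$. Contracting Voronoi cells around $X$ gives a minor $H$ whose average degree is large in terms of $r$, because boundaries of expanding trees send many edges out. Applying Mader's classical theorem~\cite{Mader1967} to $H$ yields a topological $K_{r+1}$ in $H$. I then need to lift it back to $G$. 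Each branch vertex of $H$ is a tree cell with at least $r$ emanating edges, and since the cell is a tree it contains a vertex joined to those $r$ exits by internally disjoint paths. Taking such a vertex as the branch vertex gives a subdivision in $G$.

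\textbf{Step 3 (the real issue).} The difficulty is that a topological clique in $H$ uses, at each branch cell, $r$ distinct exits, and in $G$ the routes must be internally vertex-disjoint inside cells. A cell is a tree, and a tree with a chosen root and $r$ leaves does give internally disjoint root-to-leaf paths only if the root is a branching point of degree at least $r$. Similarly, cells used as subdivision vertices of $H$ must be traversed by a single path. To handle this I would apply Mader inside the minor with extra slack: I require $H$ to contain a topological $K_{r+1}$ whose branch cells have enough tree-degree. This is exactly why minimum degree exactly $r$ (rather than average degree) matters. In a tree cell where all internal vertices have degree at least $r$, any $r$ exits can be reached from a suitable central vertex along edge-disjoint branches that separate at that vertex. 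I expect the main obstacle to be guaranteeing that the chosen exits lie in distinct branches at a common vertex of degree at least $r$. I would handle it by choosing cells of radius $L$ large and discarding exits so that each branch of the root carries at most one used exit, using the exponential growth of Step 1 to keep enough exits for the degree of $H$.
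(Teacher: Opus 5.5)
\textbf{The gap is in the lifting step of Step~2, and the repair proposed in Step~3 does not close it.}

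You assert that a tree cell with $r$ exit edges contains a vertex joined to the $r$ exits by internally disjoint paths. This is false for $r\ge 4$. (For $r=3$ the median of the three exits does work.) Suppose two exits hang off a vertex $a$ of the cell and two off a different vertex $b$. Then every vertex $v$ of the tree leaves at least two of the four exit endpoints in the same component of $T-v$. Mader's classical theorem gives you no control over which edges of $H$ are used at a branch cell, so nothing excludes this configuration.

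Your repair, allowing at most one exit per branch, caps the degree of a cell in $H$ at $d_G(v)$, where $v$ is the intended branch vertex. If $G$ is $r$-regular, every cell then has degree at most $r$. So the pruned $H$ no longer has large average degree, and finding a subdivision of $K_{r+1}$ in it is just the original problem again. Exponential growth and ``one exit per branch'' pull in opposite directions, and taking $L$ large cannot reconcile them as long as you insist on an off-the-shelf topological-clique theorem.

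\textbf{The missing idea is to decouple the two roles of the exits.} Note that the paper does not prove this statement; it only cites Mader. However, its proof of Theorem~\ref{thm:avg-mader} carries out exactly this scheme, in an induced, bounded-degree setting.

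\begin{itemize}
\item Keep all exits in $H$. For a root $x$ and each neighbour $z$ of $x$, the branch of the cell through $z$ has exponentially many vertices. Large girth sends every non-tree edge out of the cell, so this branch carries exponentially many exits to pairwise distinct cells. Thus every root is branchable in the sense of Definition~\ref{def:robust}, without the anticompleteness requirement, with many options through each $z$. This is where minimum degree $r$, rather than average degree, is really used.
\item Replace Mader's theorem by connectivity plus prescribed linkage. Pass to a highly connected subgraph $H'$ of $H$ in which suitable cells still keep many exits through each root-neighbour. This is not automatic: a generic $k$-connected subgraph may discard every exit of a given branch. Securing it is the job that Lemma~\ref{lem:core} does in the paper.
\item Pick $r+1$ such cells. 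For each, choose one exit through each of its $r$ root-neighbours, leading to pairwise distinct non-branch cells of $H'$.
\item Join the prescribed pairs of exits by disjoint paths in $H'$ minus the branch cells, using that $10\binom{r+1}{2}$-connected graphs are $\binom{r+1}{2}$-linked (Theorem~\ref{lem:TW}).
\end{itemize}

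The lift is then immediate. Intermediate cells are traversed along their unique tree path. At each branch root, the $r$ paths leave through distinct neighbours, so they are internally disjoint.
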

This is best possible in terms of the minimum degree. Mader \cite{Mader1998} showed that the function $g(r)$ in Theorem \ref{Mader1} can be chosen linear in \(r\). He \cite{Mader1999} later asked whether girth at least five might already suffice for all \(r\). Subsequent work of K\"uhn and Osthus showed that the girth threshold can indeed be bounded by an absolute constant independent of \(r\): in \cite{KuhnOsthus2002}, they initially established a general girth bound of $186$, which was reduced to $15$ for sufficiently large $r$; and subsequently in \cite{KuhnOsthus2006}, they improved the girth bound to $27$ for all $r$ (see Theorem 2.11 of \cite{KuhnOsthus2006}). In the same paper \cite{KuhnOsthus2006}, they asked, in a problem originally attributed to Shi, whether an analogous statement holds for induced subdivisions. 

\begin{problem}[K\"uhn and Osthus \cite{KuhnOsthus2006}]\label{p1}
Is there a function \(h(r)\) such that every graph of minimum degree at least \(r\ge 3\) and girth at least \(h(r)\) contains an induced subdivision of
\(K_{r+1}\)?
\end{problem}

In this paper, we answer this problem affirmatively in a strong sense: $h(r)$ can be chosen as an absolute constant.

\begin{theorem}\label{thm:main}
There exists an absolute constant \(g_0\) such that for every integer \(k\ge3\), every graph \(G\) with $\delta(G)\ge k$ and $g(G)\ge g_0$ contains an induced subdivision of \(K_{k+1}\).
\end{theorem}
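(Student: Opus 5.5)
We plan to combine the Girão–Hunter framework with the induced variant of Mader's theorem announced in the abstract: for fixed $s,\eta,D$, every graph $J$ with $\Delta(J)\le D$, $d(J)>s-2+\eta$ and sufficiently large girth contains an induced subdivision of $K_s$. The main difficulty is that this auxiliary statement needs girth growing with $s$, $\eta$ and $D$. Theorem~\ref{thm:main}, by contrast, allows only an absolute constant $g_0$. So we split according to the size of $k$ relative to the structure of $G$.

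\textbf{Step 1 (reduction to a minimal subgraph).} Pass to a minimal subgraph $G'\subseteq G$ with $\delta(G')\ge k$; induced subdivisions in an induced subgraph are induced in $G$, so we may take $G'$ induced. If $G'$ contains a vertex set where the degrees are much larger than $k$ (say $\Delta$ huge relative to $k$), we use the Girão--Hunter approach directly. They find $k+1$ branch vertices of high degree and connect them through a sparse ``expander-like'' structure. With girth at least $g_0$, balls of radius $\lfloor (g_0-1)/2\rfloor$ are trees, so we can grow disjoint tree neighbourhoods and link them by paths avoiding each other's neighbourhoods. Inducedness then follows because any chord would create a short cycle.

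\textbf{Step 2 (bounded degree case).} Otherwise we may find a subgraph $J$ with $\Delta(J)\le D(k)$ and average degree noticeably above $k-1$, namely $d(J)\ge k-1+\eta$. We will apply the induced Mader variant with $s=k+1$. Here the required girth depends on $k$. To get an absolute constant, we contract: replace $J$ by an auxiliary graph $J^*$ whose vertices are disjoint radius-$r$ balls (trees, by the girth assumption) and whose edges are short connecting paths. Then $J^*$ has girth growing like $g_0/r$ iterated, and large average degree. More precisely, we take a random or greedy partition into bounded-radius trees, using $\delta\ge k$ to make each tree ``heavy''. An induced subdivision in $J^*$, built with well-chosen connecting paths, lifts to an induced subdivision in $G$. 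Doing this lift carefully, via large girth locally and path choices globally, is where we reuse the Girão--Hunter ideas.

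\textbf{Step 3 (small $k$).} For $k$ below a threshold $k_0$, the functions from the induced Mader variant are bounded by constants depending on $k_0$. We set $g_0$ at least this bound, so that the variant applies directly after extracting a bounded-degree subgraph of average degree exceeding $k-1+\eta$. Such a subgraph is available from minimality or from a degree-dichotomy argument; when no such density gain exists, $G'$ is nearly $k$-regular and we use a $2$-factor/expansion argument.

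The main obstacle is Step 2: guaranteeing average degree strictly above $k-1$ by a margin $\eta$ in a bounded-degree subgraph while keeping the girth requirement independent of $k$. Here we expect to exploit the fact that large $k$ gives more room, since the $\eta$-gain can be taken proportional to $k$. We will try a $k$-dependent auxiliary construction that only needs constant local girth.
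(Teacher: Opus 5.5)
Your plan has a genuine gap. You correctly guess the last step: apply the induced Mader variant with $s=k+1$ to a bounded-degree subgraph of average degree above $k-1+\eta$. But the reduction to that situation, which is the heart of the proof, is only asserted (``Otherwise we may find a subgraph $J$\ldots''), and your final paragraph concedes it is open.

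\textbf{The missing reduction.} The paper does not pass to a minimal subgraph with $\delta\ge k$. It takes $d:=\max_X\delta(G[X])$ and an induced subgraph attaining it, so $G$ is $d$-degenerate with $\delta(G)=d\ge k$, and it aims for $K_{d+1}$. Degeneracy bounds the set $B$ of vertices of degree at least $d^{43}$ by $|B|\le 2n/d^{42}$; a minimal subgraph with $\delta\ge k$ gives no such bound. Deleting $B$ is harmless except at two kinds of vertices.
\begin{itemize}
\item Vertices with at least two neighbours in $B$: there are at most $60d^2|B|$ of them, by Lemma~\ref{lem:unbalanced} (a random independent subset of $B$ plus K\"uhn--Osthus on an auxiliary graph).
\item Vertices with exactly one neighbour in $B$: their degree drops to $d-1$. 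If they number at least $81n/100$, a separate argument is needed. It uses Lemma~\ref{lem:one-sided} to build pairwise disjoint, anticomplete connectors $u\,x\,y\,x'\,v$ between vertices of a random independent set $R\subseteq B$, then applies K\"uhn--Osthus to an auxiliary graph on $R$ of average degree $>2d$.
\end{itemize}
Only otherwise does $G-B$, which always has $\Delta<d^{43}$, also satisfy $d(G-B)>d-1+1/20$.

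Your Step~3 misplaces the obstruction. A nearly $k$-regular bounded-degree graph already has average degree about $k>k-1+\eta$ and is harmless. For small $k$ the maximum degree is still unbounded, so the variant does not ``apply directly''. (The case $d=3$ is dispatched by Theorem~\ref{thm:TV-ISK4}.)

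\textbf{The uniform girth bound.} Your mechanism for an absolute girth bound also fails.
\begin{itemize}
\item Contracting disjoint radius-$r$ trees only guarantees girth about $g(J)/(2r+1)$, so it lowers the girth rather than raising it. Lifting an induced subdivision back also requires controlling every edge between connecting paths, which you leave unspecified.
\item In Step~1, ``any chord would create a short cycle'' is false. An edge between far-apart vertices of two long connecting paths closes only a long cycle, so constant girth gives no inducedness for free. This is exactly why the paper sparsifies randomly to make connectors pairwise anticomplete.
\item Your hope that the $\eta$-gain can be proportional to $k$ fails already for $k$-regular graphs, where $d(J)-(k-1)=1$.
\end{itemize}
In the paper, uniformity comes instead from the growth rate. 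With $s=d+1$, $\eta=1/20$, $D=d^{43}$, take a vertex-minimal $J$ with $e(J)>\alpha|J|$. Its breadth-first layers grow by a factor $\alpha=(d-1)/2+1/80$. At a fixed depth $\ell$ they therefore dominate quantities polynomial in $d$ of fixed degree, such as $C\approx 4\ell d^{43}$, $Q\approx 900d^4$ and $\log(D^{12\ell+5})$. With $\ell$ bounded, $D_0=d^{O(\ell)}$, so $m$ is bounded as well. This yields Corollary~\ref{prop:avg-mader} with girth $8\cdot10^6$ for all $d\ge4$.

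The tree-contraction you describe does occur in the paper, but inside the proof of the variant. There it trades girth (divided by $L=4\ell+1$) for large minimum degree and connectivity of an auxiliary graph, not the other way round.
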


We have made no attempt to optimize the value of \(g_0\), as doing so would make the paper difficult to read. In fact, our proof gives \(g_0=8\cdot 10^6\). Thus Theorem \ref{thm:main} improves a result of Gir\~ao and Hunter \cite{GiraoHunter2026}, where they require $k\geq 10^8$ and $g_0 \geq 10^8$. Further discussion on the value of \(g_0\) can be found in the final section. 

By Theorem \ref{thm:main} (also Theorem \ref{Mader1}), every graph with average degree larger than \(2s\) and sufficiently large girth contains a subdivision of \(K_{s+2}\). Mader conjectured in \cite{Mader1999} that the threshold $2s$ could be lowered to \(s+1\). This was later confirmed, in a stronger asymptotic form, by Mader himself~\cite{Mader2001}.
\begin{theorem}[Mader \cite{Mader2001}]\label{ave-thm}
For every integer $s\ge4$ and every $\eta>0$, every graph with average degree at least \(s-2+\eta\) and sufficiently large girth contains a subdivision of \(K_{s}\).
\end{theorem}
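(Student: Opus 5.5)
The statement immediately above is Mader's Theorem~\ref{ave-thm}: average degree at least $s-2+\eta$ and large girth force a (not necessarily induced) subdivision of $K_s$. Since the paper cites it as a known result, my plan is to reconstruct Mader's argument in outline rather than derive it from Theorem~\ref{thm:main}.

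\textbf{Step 1 (reduce to a large minimum-degree structure).} Take a minimal subgraph $G'$ with $e(G')\ge (s-2+\eta)|G'|/2$. Minimality gives $\delta(G')\ge \lceil (s-2+\eta)/2\rceil$, which is weak. So I would instead aim for a \emph{contraction-based} model. Large girth $g$ means that balls of radius about $g/2$ are trees. Pick a maximal family of vertex-disjoint trees (``branch sets''), each of radius at most $r\ll g$. Contracting them produces a multigraph $H$ on which the density is amplified. Because the original graph has average degree exceeding $2$ by a constant factor, the tree expansion (Moore-type bound) gives each radius-$r$ ball roughly $(1+\eta')^r$ boundary edges. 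Choosing $r$ large makes the contracted graph $H$ have average degree as large as desired. Moreover, $H$ stays simple because the girth is large compared with $r$.

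\textbf{Step 2 (find a large clique subdivision in the contracted graph).} By Mader's 1967 theorem, high average degree of $H$ yields a topological $K_{N}$ in $H$ for large $N$. Lifting back through the trees gives a subdivision of a graph in which each branch vertex is replaced by a tree. This alone is not enough, because we need the branch vertices to be actual vertices of degree at least $s-1$ in $G$. The genuine difficulty is that average degree $s-2+\eta$ only guarantees that \emph{many} vertices have degree at least $s-1$, not that they are well placed.

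\textbf{Step 3 (main obstacle: realizing the branch vertices).} I would use a linkage argument. First locate $s$ vertices $v_1,\dots,v_s$, each of degree at least $s-1$, pairwise far apart; counting via $\sum (d(v)-(s-2)) \ge \eta n$ shows such high-degree vertices form a positive proportion. Then route the $s-1$ edges at each $v_i$ into the highly connected ``expander'' core obtained in Step 1. Within the core, link the $\binom{s}{2}$ pairs of endpoint stubs by disjoint paths, using Menger's theorem together with the expansion of the large-girth structure (a Bollob\'as--Thomason type linkage in a highly connected minor). The hard part is ensuring that the $s-1$ neighbourhood paths from each $v_i$ reach the core disjointly. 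This requires that the $(s-1)$-stars do not get cut off by small separators. I would handle it by taking a minimal counterexample: a small separator would give a smaller graph that still has average degree above $s-2+\eta$ and large girth, contradicting minimality. This mirrors Mader's original strategy, and the exact density bookkeeping at this step is where I expect the real work to lie.
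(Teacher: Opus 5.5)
\textbf{There is a genuine gap at Step~3.} The paper does not prove this statement; it quotes Mader's theorem without proof. The closest argument in the paper is the proof of Theorem~\ref{thm:avg-mader}, an induced analogue that additionally assumes $\Delta(J)\le D$ and lets the girth depend on $D$. Your Step~1 (contracting trees around well-separated roots, with the contracted graph simple by girth) is close in spirit to the paper's graph $H^*$. Step~3, however, is where the real work lies, as you say yourself, and it is not carried out. The device you propose for it also does not work.

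Write $\alpha=(s-2+\eta)/2$ and $n=|G'|$ for your minimal graph $G'$, and let $(A,B)$ be a separation of $G'$ with both sides proper and $S=A\cap B$. Minimality gives only $e(G'[A])<\alpha|A|$ and $e(G'[B])<\alpha|B|$, hence $e(G')<\alpha(n+|S|)$. This is entirely compatible with $e(G')\ge\alpha n$, so no contradiction arises, and nothing prevents the stars at your pre-chosen, far-apart $v_1,\dots,v_s$ from being cut off. More fundamentally, the order of choices is wrong. $G'$ itself need not be highly connected, since minimality only guarantees $\delta(G')>\alpha$. High connectivity is available only in the contracted graph, and a $k$-connected subgraph there, produced by a Mader-type result, can lie anywhere. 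Nothing makes it contain, or be disjointly reachable from, the trees of $v_1,\dots,v_s$.

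\textbf{The missing idea is to reverse the order:} first make many roots robustly branchable, then find a highly connected piece that retains them, and only then choose the branch vertices. In the paper's bounded-degree argument this goes as follows.
\begin{itemize}
\item Minimality gives $e(X,V\setminus X)>\gamma|X|+c(X)$ for every nonempty proper $X$ inducing a forest, with $\gamma>0$ depending on $s,\eta$. This drives the growth, not ``average degree above $2$'' and not the Moore bound, which gives exponential growth only when $\delta\ge3$.
\item Consequently the part of $T_y$ through each neighbour $z$ of a root $y$ grows exponentially. So $y$ has many neighbours in the contracted graph $H$ reached through $z$, all lying in different trees (Claim~\ref{c4.3}).
\item Random sparsification and the Local Lemma secure this simultaneously for every sampled root and each of its neighbours. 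This makes every sampled root of degree at least $s-1$ robustly $(s-1)$-branchable (Definition~\ref{def:robust}).
\item Lemma~\ref{lem:core} then gives a $q$-connected induced $H'\subseteq H$ in which at least $|V(H')|/(2D_0)$ vertices are such roots that have lost at most $2q^2$ of their $H$-degree.
\item Only now are $v_1,\dots,v_s$ chosen inside $H'$. Distinct representatives are taken from their witness sets, and the $\binom{s}{2}$ pairs are linked in $H'-\{v_1,\dots,v_s\}$ via Theorem~\ref{lem:TW}.
\end{itemize}

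\textbf{Your counting claim also needs a degree bound.} The count of $U=\{v:d(v)\ge s-1\}$, the Local Lemma dependency count and Lemma~\ref{lem:core} all use $\Delta\le D$. From $\sum_v(d(v)-(s-2))\ge\eta n$ you only get $|U|\ge\eta n/(\Delta-s+2)$, so $U$ is a positive proportion only under a maximum-degree bound. For example, take $s=4$ and $0<\eta\le1$, and subdivide every edge of a $D$-regular graph of large girth $t=\lfloor1/\eta\rfloor$ times. The result has arbitrarily large girth and average degree $2(t+1)/(t+2/D)>2+\eta$ for large $D$, yet only a $2/(tD+2)$ fraction of its vertices have degree at least $3$. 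An argument for the general, unbounded-degree statement therefore needs both a mechanism for branching into the core and a separate treatment of vertices of huge degree, and your outline supplies neither.
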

Theorem \ref{ave-thm} does not remain true for $\eta=0$, since there are $(s-2)$-regular graphs of arbitrarily large girth (so they contain no subdivisions of $K_s$) as shown by Sachs~\cite{Sachs1963}.
A key ingredient in the proof of Theorem~\ref{thm:main} is the following induced analogue of Theorem \ref{ave-thm}, which may be of independent interest. It shows that, for graphs of bounded maximum degree, sufficiently large girth together with average degree slightly larger than \(s-2\) forces an induced subdivision of \(K_{s}\), matching the size of complete graphs in Mader's theorem.
\begin{theorem}\label{induced Mader}
For every integer $s\ge4$, every $\eta>0$, and every integer $D\ge s-1$, there is an integer $g_{s,\eta,D}$ such that every graph $J$ satisfying
$\Delta(J)\le D$, $d(J)>s-2+\eta$ and $g(J)\ge g_{s,\eta,D}$ contains an induced subdivision of $K_s$.
\end{theorem}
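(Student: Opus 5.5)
We prove Theorem~\ref{induced Mader} by reducing it to the non-induced Theorem~\ref{ave-thm}, applied to an auxiliary graph. The auxiliary graph is chosen so that any subdivision in it lifts to an \emph{induced} subdivision in \(J\). The obstruction to inducedness is a chord: an edge of \(J\) joining two vertices of the subdivision that are not consecutive on it. Large girth already rules out short chords, so only \emph{long} chords matter, meaning chords that close a cycle of length at least \(g(J)\). The plan is to pass to a structure in which such long chords cannot occur.

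\textbf{Step 1 (cleaning and contraction).} First pass to a subgraph \(J'\) with minimum degree at least \(2\) and \(d(J')>s-2+\eta\). Then suppress degree-\(2\) vertices, so that \(J'\) becomes a subdivision of a multigraph \(M\) with \(\delta(M)\ge 3\) and \(\Delta(M)\le D\). The density excess must survive this operation. Suppressing a degree-\(2\) vertex lowers both \(|V|\) and \(|E|\) by one, so it cannot decrease the ratio \(|E|/|V|\) when \(|E|>|V|\); hence \(d(M)\ge d(J')\). The girth of \(M\) in the contracted sense stays large as long as the threads are short. If the threads are long, we instead use a different contraction.

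Concretely, partition \(V(J')\) into connected clusters of radius \(R\), chosen as a maximal \(2R\)-separated set with Voronoi cells. Since \(\Delta\le D\) and the girth is large, each cluster induces a tree of bounded size, at most roughly \(D^{R}\) vertices. Contracting the clusters yields a graph \(H\) with bounded degree. We want \(d(H)\) to stay above \(s-2+\eta/2\); more usefully, \(H\) should have average degree as large as we like. This follows because in a graph with excess density \(\eta\) and large girth, balls of radius \(R\) expand: a tree-like ball contains about \((1+\eta')^{R}\) boundary edges in the appropriate averaged sense.

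\textbf{Step 2 (find a sparse topological clique).} We do not apply Theorem~\ref{ave-thm} to \(H\) directly. Instead we apply it to \(J'\) itself and then repair chords. Take a subdivision \(T\) of \(K_s\) in \(J'\) that minimises \(|V(T)|\) among all subdivisions of \(K_s\) in \(J'\). Suppose \(xy\) is a chord of \(T\). If \(x\) and \(y\) lie on the same branch path \(P\), replacing the segment of \(P\) between them by the edge \(xy\) gives a smaller subdivision, a contradiction. If they lie on different paths, or one is a branch vertex, try rerouting along \(xy\). This works whenever one of the endpoints is internal to a path and the reroute preserves the topology, with one branch vertex shifted.

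The genuinely problematic chords are those between internal vertices of two disjoint branch paths \(P_{ab}\) and \(P_{cd}\). Rerouting along such a chord produces a \(K_s\)-subdivision only with a modified branch set. The standard trick is that a chord between \(P_{ab}\) and \(P_{cd}\), with \(\{a,b\}\cap\{c,d\}=\emptyset\), lets us build a subdivision of \(K_s\) in which one branch vertex of degree \(s-1\) is replaced by the chord endpoint. This requires some care when \(s\ge4\) and fails in general.

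\textbf{Step 3 (main obstacle).} The hard case is exactly these cross chords. Minimality alone does not forbid them, since the graph \(K_s\) plus cross chords need not contain a smaller topological \(K_s\). I would handle this by exploiting the slack \(\eta\) and bounded degree. Instead of a single minimal subdivision, find many disjoint candidate subdivisions, or better a robust version of Theorem~\ref{ave-thm} in which branch paths can be chosen among \(\Omega(1)\) alternatives inside expanding tree-like regions. Then choose paths randomly or greedily so that no two chosen paths come within distance \(1\) of each other.

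Bounded degree matters here: each path of length \(L\) forbids only \(O(DL)\) vertices. Large girth makes neighbourhoods of radius much smaller than \(g\) into trees, so local conflicts can be resolved. Global conflicts are limited because \(\Delta\le D\) bounds how many vertices a path excludes. I expect making this robustness quantitative, with the final length bound feeding into \(g_{s,\eta,D}\), to be the main difficulty. The fallback is to prove Theorem~\ref{induced Mader} by redoing Mader's argument for Theorem~\ref{ave-thm} while tracking non-adjacency throughout.
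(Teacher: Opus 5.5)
There is a genuine gap: the step that carries the whole difficulty, making the subdivision induced, is never carried out.

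Step~2 fails, as you note yourself. You take a vertex-minimal $K_s$-subdivision $T$ from Theorem~\ref{ave-thm} and reroute along chords. But a chord $xy$ between the interiors of two disjoint branch paths gives $x,y$ degree only $3$ in $T+xy$, and $T+xy$ need not contain a smaller topological $K_s$. So minimality yields no contradiction, and Theorem~\ref{ave-thm} used as a black box gives you no further structure to work with.

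Step~3 only names a ``robust version'' of Mader's theorem without formulating or proving it. The proposed random/greedy avoidance is unsupported. Your $O(DL)$ count presumes path lengths bounded in terms of $s,\eta,D$, which you never establish. Moreover, local avoidance says nothing about whether the remaining graph still supports the required connections.

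Step~1 contains the right seed, namely contracting tree-like clusters to boost the degree, but you then discard $H$. Its expansion claim is also false as stated. A graph with $d(J)>s-2+\eta$ can contain long threads of degree-$2$ vertices, around which balls grow only linearly. Averaged expansion is not what is needed; you need expansion from every vertex in every direction. The fix is to pass to a vertex-minimal induced $J$ with $e(J)>\alpha|J|$, where $\alpha=(s-2)/2+\eta/4$. Then every forest $X$ satisfies $e(X,V(J)\setminus X)>(\alpha-1)|X|+c(X)$. Since $s\ge4$ forces $\alpha>1$, the branch leaving any vertex $y$ through any neighbour $z$ grows geometrically.

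The idea you are missing is to make anticompleteness automatic at the level of an auxiliary graph, instead of repairing chords afterwards. The paper's construction runs as follows.
\begin{enumerate}[label=\textup{(\arabic*)}]
\item Take a maximal $2\ell$-separated set $S^*$, extending a maximal separated set of vertices of degree $\ge s-1$. Build nearest-root BFS trees $T_w$.
\item Join $u,v\in S^*$ when the unique path $P_{uv}$ in $J[T_u\cup T_v]$ has length at most $4\ell+1$.
\item Sample $S\subseteq S^*$ with small probability. Keep $uv$ only if $T_u,T_v$ are the only sampled trees that $P_{uv}$ meets or sends an edge to. Then paths of vertex-disjoint kept edges are disjoint and anticomplete in $J$.
\item Every sampled vertex keeps at least $Q$ edges through each of its $J$-neighbours. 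This follows from the geometric growth, the girth (which makes the alternatives through a fixed $z$ independent), Chernoff and the local lemma.
\end{enumerate}
So the auxiliary graph has huge minimum degree, bounded degree and large girth, and its sampled vertices of $J$-degree $\ge s-1$ are robustly $(s-1)$-branchable. Penev--Thomass\'e--Trotignon then gives a $q$-connected induced piece retaining many such vertices with almost all their degree. Thomas--Wollan links chosen neighbours of $s$ of them by vertex-disjoint paths. Taking shortest paths in the lifted unions yields the induced $K_s$-subdivision. Your proposal needs some substitute for this degree boosting combined with built-in anticompleteness.
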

The bounded maximum degree assumption in Theorem~\ref{induced Mader} is essential in our proof. Its proof combines a robust-expansion argument, in the spirit of the Koml\'os--Szemer\'edi sublinear-expander framework \cite{KomlosSzemeredi1994,KomlosSzemeredi1996} and its recent applications to the subdivisions of complete graphs \cite{BucicMontgomery2024,LiuMontgomery2023,Wang2023}, with a probabilistic method. We expect this may be useful in other problems on induced subdivisions. 
\subsection{Applications}
A graph $G$ is said to be \textit{$\chi$-bounded} with a $\chi$-bounding function $f$ if, for all induced subgraphs $G'$ of $G$, we have $\chi(G') \leq f(\omega(G'))$. A class of graphs is $\chi$-bounded if there exists a $\chi$-bounding function that holds for all graphs of the class, see Gy\'arf\'as \cite{Gyarfas1987}. One of the most celebrated results is the Strong Perfect Graph Theorem by Chudnovsky, Robertson, Seymour and Thomas \cite{ChudnovskyRobertsonSeymourThomas2006}. The study of graphs with prescribed forbidden induced subgraphs, such as holes or subdivisions, is a major topic in structural graph theory. This is also closely related to the Erd\H{o}s-Hajnal conjecture (see, e.g., \cite{ChudnovskyScottSeymourSpirkl2023}). Scott \cite{Scott1997} conjectured that, for every fixed graph $H$, the class of graphs containing no induced subdivision of $H$ is $\chi$-bounded. A particularly natural case is when $H$ is a complete graph. In this direction, L\'ev\^eque, Maffray, and Trotignon~\cite{LevequeMaffrayTrotignon2012}, and Trotignon and Vu\v{s}kovi\'c~\cite{TrotignonVuskovic2017} proved that every graph of girth at least \(5\) with no induced subdivision of \(K_4\) is \(3\)-colorable (see Theorem 2.12 of \cite{TrotignonVuskovic2017}). As an immediate consequence of Theorem~\ref{thm:main}, we obtain the corresponding result for all complete graphs.
\begin{corollary}
Let \(g_0\) be as in Theorem~\ref{thm:main}. For every integer \(r\ge4\), every graph of girth at least \(g_0\) with no induced subdivision of \(K_r\) is \((r-2)\)-degenerate. In particular, it is \((r-1)\)-colorable.
\end{corollary}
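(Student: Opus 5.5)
The corollary follows from Theorem~\ref{thm:main} applied with $k=r-1$, together with the standard link between degeneracy and colouring. Set $k=r-1\ge 3$, so Theorem~\ref{thm:main} applies. Let $G$ be a graph of girth at least $g_0$ with no induced subdivision of $K_r=K_{k+1}$. To show that $G$ is $(r-2)$-degenerate, we must show that every subgraph $H\subseteq G$ has a vertex of degree at most $r-2$. It is enough to check this for induced subgraphs, since passing from an induced subgraph to one of its spanning subgraphs only lowers degrees.

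So suppose, for contradiction, that some induced subgraph $H$ of $G$ satisfies $\delta(H)\ge r-1=k$. Taking a subgraph can only create no new cycles, so $g(H)\ge g(G)\ge g_0$; this also holds if $H$ is a forest, whose girth is infinite. Theorem~\ref{thm:main} then gives an induced subdivision $S$ of $K_{k+1}$ in $H$.

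Because $H$ is an induced subgraph of $G$, we have $G[V(S)]=H[V(S)]=S$. Hence $S$ is an induced subdivision of $K_r$ in $G$, contradicting the hypothesis. Therefore every induced subgraph of $G$, and hence every subgraph, has a vertex of degree at most $r-2$, so $G$ is $(r-2)$-degenerate. The only point needing any care is this transfer of inducedness from $H$ to $G$, and it is immediate from $H$ being induced.

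For the colouring statement, order the vertices $v_1,\dots,v_n$ so that each $v_i$ has at most $r-2$ neighbours among $v_{i+1},\dots,v_n$. Such an order exists: repeatedly delete a vertex of minimum degree, which has degree at most $r-2$ in what remains. Now colour greedily from $v_n$ down to $v_1$. When $v_i$ is coloured, at most $r-2$ of its neighbours already have colours, so one of $r-1$ colours is always free. This gives a proper $(r-1)$-colouring of $G$.
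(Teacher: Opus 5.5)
Your proof is correct and is exactly the argument the paper intends when it calls the corollary an immediate consequence of Theorem~\ref{thm:main}. You apply the theorem with $k=r-1\ge 3$ to any (nonempty) induced subgraph of minimum degree at least $r-1$, transfer inducedness to $G$, and obtain the colouring from degeneracy as in Proposition~\ref{prop:degenerate-basic}(ii), which your greedy argument reproves.
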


A well-known conjecture of Haj\'os (see, e.g., \cite{JensenToft1995}) states that every graph of chromatic number at least $r$ contains a subdivision of $K_r$. This conjecture is true for $r \le 4$~\cite{Dirac1952}. However, it was disproved by Catlin \cite{Catlin1979} when $r\geq 7$, and Erd\H{o}s and Fajtlowicz \cite{ErdosFajtlowicz1981} proved that it fails even for almost all graphs. The relation between chromatic number and clique subdivisions has remained a central theme; see, for example, the work of Fox, Lee and Sudakov~\cite{FoxLeeSudakov2013}. On the other hand, since every graph of chromatic number at least $r$ has a subgraph of minimum degree at least $r-1$, Theorem \ref{thm:main} shows that the conjecture does hold for all graphs of sufficiently large girth, even in induced form.
\begin{corollary}
Let \(g_0\) be as in Theorem~\ref{thm:main}. Then every graph of chromatic number at least $r\geq 3$ and girth at least $g_0$ contains an induced subdivision of $K_r$, and thus a subdivision of $K_r$.
\end{corollary}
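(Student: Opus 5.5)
The statement to prove is the last corollary: every graph of chromatic number at least $r\ge 3$ and girth at least $g_0$ contains an induced subdivision of $K_r$. The plan is to reduce to Theorem~\ref{thm:main} through a minimum-degree subgraph, and then check that inducedness survives the passage back to the original graph.

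Let $G$ have $\chi(G)\ge r$ and $g(G)\ge g_0$. We first pass to an $r$-vertex-critical subgraph. Choose an induced subgraph $H\subseteq G$ that is minimal with respect to $\chi(H)\ge r$; such an $H$ exists because $G$ itself qualifies. Then $\chi(H)=r$, and we claim $\delta(H)\ge r-1$. Suppose some vertex $v$ had $\deg_H(v)\le r-2$. By minimality, $\chi(H-v)\le r-1$. Any $(r-1)$-colouring of $H-v$ extends to $v$, since $v$ sees at most $r-2$ colours. This would give $\chi(H)\le r-1$, a contradiction.

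We take $H$ to be an \emph{induced} subgraph of $G$. Minimality is then taken over induced subgraphs, and deleting a vertex keeps the graph induced, so the argument above goes through unchanged. Moreover $g(H)\ge g(G)\ge g_0$, since every cycle of $H$ is a cycle of $G$.

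Now set $k=r-1$. For $r\ge 4$ we have $k\ge 3$ and $\delta(H)\ge k$, so Theorem~\ref{thm:main} gives an induced subdivision $S$ of $K_{k+1}=K_r$ in $H$. Because $H$ is induced in $G$, the graph $S$ is also induced in $G$: any edge of $G$ between two vertices of $S$ lies in $G[V(H)]=H$, and hence in $H[V(S)]=S$. An induced subdivision is in particular a subdivision, which gives the final clause.

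The case $r=3$ is not covered by Theorem~\ref{thm:main}, which needs $k\ge 3$, so we treat it directly. Here $\chi(G)\ge 3$ means $G$ is not bipartite, so $G$ contains an odd cycle. Take a shortest cycle $C$ in $G$. It is induced: a chord would split $C$ into two strictly shorter cycles. A cycle is a subdivision of $K_3$, so $C$ is the required induced subdivision.

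The only real subtlety is making sure the minimum-degree subgraph is induced, so that inducedness transfers back to $G$. Choosing $H$ minimal among induced subgraphs settles this. The remaining steps are routine.
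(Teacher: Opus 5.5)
Your proof is correct and follows the same route as the paper, which simply notes that a graph with $\chi\ge r$ contains a subgraph of minimum degree at least $r-1$ and then invokes Theorem~\ref{thm:main}. You also fill in two details the paper leaves implicit: you take the critical subgraph to be induced, so that inducedness transfers back to $G$, and you treat $r=3$ separately via a shortest cycle, since Theorem~\ref{thm:main} only covers $k\ge 3$.
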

\medskip
The paper is organized as follows. In Section~\ref{sec:prelim}, we introduce the necessary definitions and some tools that we will need later. Section~\ref{sec:cleaning} establishes structures of graphs without induced subdivisions of complete graphs. Section~\ref{sec:robust} is devoted to proving the induced version of Mader's result, i.e., Theorem \ref{induced Mader}. In Section~\ref{sec:main-proof}, we prove our main result, Theorem~\ref{thm:main}. In the final section, we give some concluding remarks.

\paragraph{Notation.}
For a graph \(G\), we write \(V(G)\) and \(E(G)\) for its vertex set and edge set, respectively. We also write \(|G|:=|V(G)|\). We denote by \(d(G)\), \(\delta(G)\), \(\Delta(G)\), and \(g(G)\) the average degree, minimum degree, maximum degree, and girth of \(G\), respectively. For a vertex \(v\in V(G)\), we write \(N_G(v)\) for its neighborhood and \(d_G(v):=|N_G(v)|\) for its degree. For \(S\subseteq V(G)\), we write
\(
    N_G(S):=\{v\in V(G)\setminus S: v \text{ has a neighbor in } S\}.
\)
For \(S\subseteq V(G)\) and \(v\in V(G)\), we write
\(
    d_S(v):=|N_G(v)\cap S|.
\)
For \(U\subseteq V(G)\), we write \(G[U]\) for the subgraph of \(G\) induced by \(U\), and \(G-U\) for \(G[V(G)\setminus U]\). 

For disjoint sets \(X,Y\subseteq V(G)\), we write \(e_G(X,Y)\) for the number of edges with one endpoint in \(X\) and the other in \(Y\). For \(X\subseteq V(G)\), we write \(e_G(X):=|E(G[X])|\). We write \(\dist_G(u,v)\) for the distance between vertices \(u,v\in V(G)\), and \(c_G(X)\) for the number of connected components of \(G[X]\). A path with endpoints \(u\) and \(v\) is called a \(u,v\)-path and may be denoted by \(P_{uv}\). When the underlying graph is clear, we omit the subscript \(G\). All logarithms are natural.

\section{Preliminaries}\label{sec:prelim}
We begin with the notion of degeneracy, which will be used to describe both sparsity and coloring.
\begin{definition}
Let \(d\) be a nonnegative integer. A graph \(G\) is \emph{\(d\)-degenerate} if every nonempty induced subgraph of \(G\) contains a vertex of degree at most \(d\).
\end{definition}
The next proposition presents two elementary facts of degeneracy that we use repeatedly. The proofs are easy and omitted.

\begin{proposition}\label{prop:degenerate-basic}
Let \(d\) be a nonnegative integer, and let \(G\) be a \(d\)-degenerate graph. Then the following hold.
\begin{enumerate}[label=\textup{(\roman*)}]
    \item Every induced subgraph \(H\) of \(G\) satisfies $e(H)\le d|V(H)|$.
    \item \(G\) is \((d+1)\)-colorable.
\end{enumerate}
\end{proposition}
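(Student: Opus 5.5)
Both parts follow from the definition of degeneracy by the standard peeling argument.

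For (i), let \(H\) be an induced subgraph of \(G\) on \(n\) vertices, and induct on \(n\). The case \(n=0\) is trivial. If \(n\ge 1\), then \(H\) is itself a nonempty induced subgraph of \(G\), so it has a vertex \(v\) with \(d_H(v)\le d\). The graph \(H-v\) is again an induced subgraph of \(G\), so by induction \(e(H-v)\le d(n-1)\). Hence
\[
e(H)=e(H-v)+d_H(v)\le d(n-1)+d=dn.
\]
Equivalently, one can fix a degeneracy ordering \(v_1,\dots,v_n\), in which each \(v_i\) has at most \(d\) neighbours among \(v_{i+1},\dots,v_n\). Counting each edge at its earlier endpoint gives the same bound; the counting in fact gives \(e(H)\le d(n-1)\), which is stronger than needed.

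For (ii), build the ordering by repeated removal: let \(v_1\) be a vertex of degree at most \(d\) in \(G\), let \(v_2\) be a vertex of degree at most \(d\) in \(G-v_1\), and so on. This is possible because every remaining graph is a nonempty induced subgraph of \(G\). Now colour greedily in the reverse order \(v_n,v_{n-1},\dots,v_1\). When \(v_i\) is coloured, its already-coloured neighbours lie among \(v_{i+1},\dots,v_n\). There are at most \(d\) of them, since \(d_{G-\{v_1,\dots,v_{i-1}\}}(v_i)\le d\). So one of the colours \(1,\dots,d+1\) is free for \(v_i\).

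Neither part has a real obstacle. The only point to check is that the hypothesis applies at each step, which holds because the degeneracy condition is quantified over all nonempty induced subgraphs of \(G\), and every graph obtained by deleting vertices is one of these.
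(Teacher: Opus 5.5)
Your argument is correct and is the standard one. The paper omits the proof as easy, and your degeneracy ordering, in which each vertex has at most \(d\) neighbours later in the order, is exactly what the paper later invokes as a ``\(d\)-degenerate ordering'' with at most \(d\) right-neighbours, for instance in the proof of Lemma~\ref{lem:unbalanced}. One small point: your aside \(e(H)\le d(n-1)\) needs \(H\) to be nonempty, but the main inductive bound does not rely on it.
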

We also require the following lemma, which allows us to pass to an induced subgraph with large minimum degree in any graph with large average degree.
\begin{lemma}\label{lem:avg-core}
Let \(d\ge 0\) be an integer. If \(G\) is a graph with \(d(G)>2d\),
then \(G\) contains a nonempty induced subgraph \(H\) with \(\delta(H)\ge d+1\).
\end{lemma}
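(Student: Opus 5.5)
We will prove Lemma~\ref{lem:avg-core} by the standard deletion argument: repeatedly remove vertices of low degree and check that the edge count cannot fall to zero. The quantity to track is $e(G') - d|V(G')|$. Since $d(G)=2e(G)/|G|>2d$, we start with
\[
e(G)-d|G|>0 .
\]

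Set $G_0:=G$. As long as the current graph $G_i$ has a vertex $v$ with $d_{G_i}(v)\le d$, let $G_{i+1}:=G_i-v$. Deleting such a vertex removes at most $d$ edges and exactly one vertex. Hence
\[
e(G_{i+1})-d|G_{i+1}| \ \ge\ e(G_i)-d|G_i| ,
\]
so the quantity $e(G_i)-d|G_i|$ is non-decreasing along the process. By induction it remains strictly positive at every step.

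The process must stop, because the vertex set shrinks at each step. Let $H$ be the final graph. Every graph in the sequence is an induced subgraph of $G$, since each is obtained by deleting vertices. Moreover, $H$ is nonempty: the empty graph has $e-d|\cdot|=0$, which would contradict strict positivity. Because the process stopped, no vertex of $H$ has degree at most $d$, which means $\delta(H)\ge d+1$.

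The only point needing care is the strict inequality, which is what rules out ending at the empty graph. It follows directly from $d(G)>2d$, so there is no real obstacle here. If $d=0$, the argument still applies: $G$ has an edge, and $H$ is any graph with no isolated vertices, for instance one containing that edge.
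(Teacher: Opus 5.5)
Your proof is correct and is essentially the paper's argument. The paper argues by contradiction that $G$ would be $d$-degenerate and then invokes $e(G)\le d|V(G)|$ (Proposition~\ref{prop:degenerate-basic}(i)); that bound is exactly your peeling invariant $e(G_i)-d|G_i|$, run forwards instead of by contradiction.
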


\begin{proof}
Suppose not. Then every nonempty induced subgraph of \(G\) contains a vertex of
degree at most \(d\). In other words, \(G\) is \(d\)-degenerate. By
Proposition~\ref{prop:degenerate-basic}, we have
\(
e(G)\le d|V(G)|.
\)
Hence
\(
d(G)=\frac{2e(G)}{|V(G)|}\le 2d
\), a contradiction.
\end{proof}
We also need the following two probabilistic tools.
\begin{lemma}[Chernoff bound \cite{AlonSpencer}]\label{lem:chernoff}
Let \(X=\sum_{i=1}^n X_i\), where \(X_1,\dots,X_n\) are independent random variables with values in \([0,1]\), and let \(\mu=\mathbb E[X]\). Then, for every \(0<\delta<1\),
\[
\mathbb P\!\left(X\le (1-\delta)\mu\right)\le \exp\!\left(-\frac{\delta^2\mu}{2}\right).
\]
\end{lemma}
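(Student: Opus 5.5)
The plan is the standard exponential-moment (Chernoff) method applied to the lower tail. The two nontrivial ingredients are a convexity bound on the moment generating function of each $X_i$ and an elementary one-variable inequality at the end.

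Fix $t>0$ and set $a=(1-\delta)\mu$. Since $x\mapsto e^{-tx}$ is decreasing, Markov's inequality applied to $e^{-tX}$ gives
\[
\Prb(X\le a)=\Prb\bigl(e^{-tX}\ge e^{-ta}\bigr)\le e^{ta}\,\E\bigl[e^{-tX}\bigr]=e^{ta}\prod_{i=1}^n \E\bigl[e^{-tX_i}\bigr],
\]
where the last equality uses independence. To bound each factor, write $p_i=\E[X_i]$. Convexity of $x\mapsto e^{-tx}$ on $[0,1]$ gives $e^{-tx}\le 1-x(1-e^{-t})$ for $x\in[0,1]$, since the chord lies above the graph. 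Taking expectations and using $1+y\le e^{y}$,
\[
\E\bigl[e^{-tX_i}\bigr]\le 1-p_i(1-e^{-t})\le \exp\bigl(-p_i(1-e^{-t})\bigr).
\]
Multiplying these bounds over $i$ yields
\[
\Prb(X\le a)\le \exp\bigl(t(1-\delta)\mu-\mu(1-e^{-t})\bigr).
\]

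Next I optimize in $t$ by choosing $t=-\log(1-\delta)>0$, which is valid because $0<\delta<1$. Then $e^{-t}=1-\delta$ and the exponent becomes $-\mu\bigl(\delta+(1-\delta)\log(1-\delta)\bigr)$. This gives
\[
\Prb(X\le (1-\delta)\mu)\le \exp\Bigl(-\mu\bigl(\delta+(1-\delta)\log(1-\delta)\bigr)\Bigr).
\]
If $\mu=0$ the claimed bound is the trivial bound $1$, so this case needs no separate treatment.

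It remains to prove the elementary inequality $\delta+(1-\delta)\log(1-\delta)\ge \delta^2/2$ for $0\le\delta<1$. This is the only step needing any care, and I expect it to be easy. Let $f(\delta)=\delta+(1-\delta)\log(1-\delta)-\delta^2/2$. Then $f(0)=0$ and
\[
f'(\delta)=1-\log(1-\delta)-1-\delta=-\log(1-\delta)-\delta.
\]
This is nonnegative, because $-\log(1-\delta)=\sum_{j\ge1}\delta^j/j\ge\delta$. Hence $f\ge0$ on $[0,1)$. Substituting this into the previous bound gives $\Prb(X\le(1-\delta)\mu)\le \exp(-\delta^2\mu/2)$, as claimed.
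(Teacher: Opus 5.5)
Your proof is correct and complete. It is the standard exponential-moment argument: Markov's inequality applied to $e^{-tX}$, the convexity bound that reduces each $[0,1]$-valued $X_i$ to the Bernoulli case, the choice $t=-\log(1-\delta)$, and the elementary inequality $\delta+(1-\delta)\log(1-\delta)\ge \delta^2/2$. The paper gives no proof of its own and simply cites Alon and Spencer, whose argument is essentially the one you wrote.
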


\begin{lemma}[Lov\'asz Local Lemma {\cite{AlonSpencer,ErdosLovasz1975}}]\label{LLL}
Let $(\Omega,\mathbb{P})$ be a probability space and let $A_1,\dots,A_t$ be events with $\mathbb{P}(A_i)\le p$ for all $i$. Suppose that each $A_i$ is independent of all but at most $D$ of the other events. If
\(
p\le \frac{1}{e(D+1)},
\)
then
\[
\mathbb{P}\!\left(\bigcap_{i=1}^t A_i^c\right)\geq (1-ep)^t .
\]
\end{lemma}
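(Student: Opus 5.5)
This is the classical symmetric Local Lemma, and I would prove it by the standard induction on conditional probabilities. Throughout, I read ``$A_i$ is independent of all but at most $D$ of the other events'' in the usual sense: for each $i$ there is a set $\Gamma(i)\subseteq[t]\setminus\{i\}$ with $|\Gamma(i)|\le D$ such that $A_i$ is mutually independent of the family $\{A_j: j\notin\Gamma(i)\cup\{i\}\}$. For $S\subseteq[t]$ write $B_S:=\bigcap_{j\in S}A_j^c$, with $B_\emptyset=\Omega$.

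The key claim is the following. For every $S\subseteq[t]$ and every $i\notin S$ with $\mathbb P(B_S)>0$,
\[
\mathbb P(A_i\mid B_S)\le ep .
\]
I would prove this by induction on $|S|$. The case $S=\emptyset$ is immediate, since $\mathbb P(A_i)\le p\le ep$. For the inductive step, split $S=S_1\cup S_2$ with $S_1=S\cap\Gamma(i)$ and $S_2=S\setminus\Gamma(i)$. If $S_1=\emptyset$, then mutual independence gives $\mathbb P(A_i\mid B_S)=\mathbb P(A_i)\le p$. Otherwise write
\[
\mathbb P(A_i\mid B_S)=\frac{\mathbb P(A_i\cap B_{S_1}\mid B_{S_2})}{\mathbb P(B_{S_1}\mid B_{S_2})}.
\]
The numerator is at most $\mathbb P(A_i\mid B_{S_2})=\mathbb P(A_i)\le p$, again by mutual independence. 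For the denominator, list $S_1=\{j_1,\dots,j_r\}$ with $r\le D$ and expand by the chain rule:
\[
\mathbb P(B_{S_1}\mid B_{S_2})=\prod_{\ell=1}^r\Bigl(1-\mathbb P\bigl(A_{j_\ell}\mid B_{S_2\cup\{j_1,\dots,j_{\ell-1}\}}\bigr)\Bigr).
\]
Each conditioning set here has size less than $|S|$, so the induction hypothesis bounds each factor below by $1-ep$. Hence the denominator is at least $(1-ep)^D$. Since $ep\le 1/(D+1)$, this is at least $(1-\frac1{D+1})^D\ge 1/e$. The ratio is therefore at most $p\cdot e$, which closes the induction.

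Positivity of the conditioning events needs a line of care. I would run the induction jointly with the statement $\mathbb P(B_S)>0$. This follows from the same chain-rule product, since every factor is at least $1-ep\ge 1-\frac1{D+1}>0$.

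Finally, the chain rule applied to the whole family gives
\[
\mathbb P\Bigl(\bigcap_{i=1}^t A_i^c\Bigr)=\prod_{i=1}^t\bigl(1-\mathbb P(A_i\mid B_{\{1,\dots,i-1\}})\bigr)\ge(1-ep)^t .
\]

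The only delicate step is the denominator estimate. That is exactly where the hypothesis $p\le 1/(e(D+1))$ enters, through the inequality $(1-\frac1{D+1})^D\ge e^{-1}$. Apart from that, the argument is bookkeeping with the split of $S$ into dependent and independent parts.
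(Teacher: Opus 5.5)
Your argument is correct and is the standard conditional-probability induction for the Local Lemma from Alon--Spencer (the paper only cites the result and gives no proof), specialized to the weights $x_i=ep$; the hypothesis $p\le 1/(e(D+1))$ enters exactly through $(1-ep)^D\ge e^{-1}$. One small slip: your positivity step uses $1-\frac{1}{D+1}>0$, which fails for $D=0$. In that case every step falls into your case $S_1=\emptyset$, so each factor in the positivity product is at least $1-p\ge 1-e^{-1}>0$, and the argument goes through (alternatively, if $ep=1$ the bound $(1-ep)^t=0$ is trivial).
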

To bound the order of the graph, we rely on the following Moore-type bound under a large-girth assumption.
\begin{lemma}[Alon, Hoory and Linial {\cite{AlonHooryLinial2002}}]\label{lem:moore}
Let $m\ge 0$, and let $G$ be a graph with $\delta(G)\geq \delta\ge 2$ and 
$g(G)\geq 2m+2$. Then
\(
|V(G)|\ge 2\sum_{i=0}^m (\delta-1)^i.
\)
\end{lemma}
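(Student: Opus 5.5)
The plan is the classical BFS-tree counting argument, adapted to the bipartite double-counting that Alon, Hoory and Linial use for the sharper bound $2\sum_{i=0}^m(\delta-1)^i$. Rather than growing a ball from a single vertex, we grow it from an edge. Fix an edge $uv$, which exists since $\delta\ge 2$. For $0\le i\le m$, let $S_i$ be the set of vertices at distance exactly $i$ from the set $\{u,v\}$. The key claim is that the vertices within distance $m$ of $\{u,v\}$ induce, in the relevant sense, two disjoint trees rooted at $u$ and at $v$. In these trees every non-leaf vertex other than the roots has at least $\delta-1$ children, and each root has at least $\delta-1$ children besides the other root.

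\textbf{Step 1 (tree structure).} We use $g(G)\ge 2m+2$ to show three things. First, $S_0=\{u,v\}$. Second, every vertex $w\in S_i$ with $1\le i\le m$ has a unique neighbour in $S_{i-1}$. Third, the shortest paths from $w$ to $\{u,v\}$ are unique. Suppose $w$ had two distinct shortest routes back to $\{u,v\}$: either two neighbours in $S_{i-1}$, or two paths ending in different endpoints of $uv$. Adding the edge $uv$ where needed, these routes would close a cycle of length at most $2i+1\le 2m+1<g(G)$, which is a contradiction. The same argument shows that no vertex lies in both the $u$-side and the $v$-side.

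\textbf{Step 2 (branching).} For $w\in S_i$ with $i\le m-1$, we show that all neighbours of $w$ other than its parent lie in $S_{i+1}$. A neighbour in $S_i$ or $S_{i-1}$ other than the parent would create a cycle of length at most $2i+2\le 2m<g(G)$. Hence $w$ has at least $\delta-1$ children. The roots $u$ and $v$ play the role of $w$ with the other root as parent, so each also has at least $\delta-1$ children.

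\textbf{Step 3 (counting).} By induction, the tree rooted at $u$ has at least $(\delta-1)^i$ vertices at depth $i$. This holds for $i=0$, and each level multiplies the count by at least $\delta-1$. Children are distinct vertices because the parents are unique. The same bound holds for the tree rooted at $v$. Since all these vertices are distinct, we get
\[
|V(G)|\ge \sum_{i=0}^m |S_i| \ge 2\sum_{i=0}^m(\delta-1)^i .
\]

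The only delicate point is Step 1. There we must check the exact cycle-length bookkeeping so that girth $2m+2$, rather than $2m+3$, suffices. The worst case is a cycle through the edge $uv$ formed by two depth-$m$ paths joined by an edge at level $m$. This case never arises, because it only matters for edges leaving depth-$m$ vertices, which we do not need to control. I would write out the cases by the distances of the two endpoints of the offending edge from $\{u,v\}$.
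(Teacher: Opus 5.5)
Your argument is correct. It is the classical edge-rooted proof of the even-girth Moore bound: girth at least $2m+2$ forces unique parents, since any violation closes a cycle of length at most $2i+1\le 2m+1$. It also forces full branching through level $m-1$, since any violation closes a cycle of length at most $2i+2\le 2m$. So each of the two disjoint trees has at least $(\delta-1)^i$ vertices at depth $i$.

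The paper gives no proof of this lemma and simply cites Alon, Hoory and Linial. Their theorem is the much stronger statement with $\delta$ replaced by the \emph{average} degree, proved by counting non-backtracking walks. So your opening description of their method as ``bipartite double-counting'' is inaccurate, and their machinery is only needed for the irregular case. For the minimum-degree form stated here, your elementary argument is self-contained and suffices. This is also all the paper uses: in Lemma~\ref{lem:core} the lemma is applied to $J_t$ with $\delta(J_t)\ge k^2$.

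When you write it up, spell out two points.
\begin{itemize}
\item In Step~1, when the two routes end at different roots they might a priori intersect. Argue instead that a closed walk of odd length $2i+1$ contains an odd cycle of length at most $2i+1$, or induct on $i$.
\item In Step~2, an extra neighbour $w'\in S_i$ on the opposite side gives a closed walk of even length $2i+2$. This is a genuine cycle only because you already know the two trees are vertex-disjoint.
\end{itemize}
The tight bookkeeping therefore lies in Step~2, where $i\le m-1$ is exactly what is needed, not in Step~1. Also, a $(2m+2)$-cycle through $uv$ and two depth-$m$ vertices can exist; it is simply irrelevant to the count.
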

Another important definition we need is $k$-linkage. It allows one to find a subdivision of $K_{\Omega(k^{1/2})}$ in a robust way, see for example, \cite{BollobasThomason1998}.
\begin{definition}
Given a graph $G$ on at least $2k$ vertices, we say $G$ is \emph{$k$-linked} if for any two disjoint sets of $k$ vertices
$\{x_1, \ldots,x_k\}$, $\{y_1,\ldots, y_k\}$, we can find vertex-disjoint paths $P_1, \ldots, P_k$ where the end vertices of $P_i$ are $x_i$ and $y_i$.
\end{definition}
We will make essential use of the following result of Thomas and Wollan \cite{ThomasWollan2005}.
\begin{theorem}[Thomas and Wollan {\cite{ThomasWollan2005}}]\label{lem:TW}
If $G$ is $10k$-connected, then $G$ is $k$-linked.
\end{theorem}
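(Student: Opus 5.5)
The statement immediately above is the Thomas--Wollan theorem (Theorem~\ref{lem:TW}): every $10k$-connected graph is $k$-linked. It is quoted from the literature and will be used as a black box. Still, here is how I would prove it, following the Bollob\'as--Thomason strategy with the sharpened counting of Thomas and Wollan.

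The argument is by contradiction on a minimal counterexample. Fix $k$, and suppose $G$ is $10k$-connected but not $k$-linked, with terminals $X=\{x_1,\dots,x_k\}$ and $Y=\{y_1,\dots,y_k\}$ admitting no linkage. Among all graphs $G'\subseteq G$ with $X\cup Y\subseteq V(G')$ that still satisfy a rooted connectivity condition, take a minimal one. The condition I would use is that no separation $(A,B)$ of $G'$ with $X\cup Y\subseteq A$ has order less than $|A\cap B|$-appropriate bounds; concretely, every separation $(A,B)$ with $X\cup Y\subseteq A$ has order at least $2k$, or else $B\subseteq A$. The minimality of $G'$ yields the following local structure.
\begin{enumerate}[label=\textup{(\alph*)}]
\item Deleting an edge creates a separation of order at most $2k-1$ with the rooted property. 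This forces every edge outside $X\cup Y$ to lie in many triangles, by an edge-contraction argument: contracting an edge $uv$ must break the condition, so $u$ and $v$ have at least $k$-type many common neighbours.
\item Consequently, either $G'$ has at least $5k|V(G')|$ edges, or it has a small vertex of low degree whose neighbourhood is dense.
\end{enumerate}

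The key extremal step is a Mader-type density lemma: a graph with $e\ge 5k|V|$ (suitably rooted) contains a highly connected minor, and from it a $k$-linkage. To set this up, use the $10k$-connectivity of $G$ together with the minimality to show $e(G')\ge 5k|V(G')|-\binom{?}{2}$. Then use the triangle-richness from (a) to find, inside $N(v)$ for a minimum-degree vertex $v$, a $K_{2k}$-like structure: the neighbourhood has minimum degree at least about half its size. Route the $k$ paths through this dense neighbourhood via disjoint $2$-paths, using Menger's theorem to connect $X\cup Y$ to $N(v)$ by $2k$ disjoint paths, which $10k$-connectivity supplies.

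The main obstacle is (b), the precise edge-counting that yields the constant $10$. Bollob\'as--Thomason obtain $22k$, and improving this requires Thomas--Wollan's careful analysis of the minimal rooted counterexample, showing that every non-terminal vertex has degree at least $5k$ and that neighbourhoods are dense enough to link. Since the paper only needs some linear bound, I would in practice accept any constant $C$ with $Ck$-connectivity implying $k$-linkage, and simply cite \cite{ThomasWollan2005} for $C=10$.
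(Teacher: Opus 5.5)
The paper gives no proof of this statement. It imports \cite{ThomasWollan2005} as a black box, used only in Claim~\ref{c4.6} to deduce that $H''$ is $\binom{s}{2}$-linked, so your bottom line of citing it is exactly what the paper does. One caveat: the constant $10$ is built into $q=10a^2+a+1$ and hence into the numerical checks giving $g_0=8\cdot10^6$ in Corollary~\ref{prop:avg-mader}. A weaker constant therefore suffices only for the qualitative result.

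The sketch you add, however, would not work as written. Its minimal-counterexample setup is degenerate. The edgeless graph on $X\cup Y$ is a subgraph of $G$, is not linked, and satisfies your rooted condition vacuously: any separation $(A,B)$ with $X\cup Y\subseteq A$ has $A=V$, so $B\subseteq A$. Minimality over subgraphs with respect to that condition therefore yields nothing. Nor does breaking a connectivity condition produce common neighbours. The triangle count comes from the identity $e(G/uv)=e(G)-1-|N(u)\cap N(v)|$, which forces $|N(u)\cap N(v)|\ge d$ only when an inequality $e\ge d|V|-c$ is part of the hypothesis that the contraction must violate. So the density bound has to be assumed and carried through a minor-minimal counterexample; this is the form Thomas and Wollan actually prove, namely that $2k$-connectivity plus $5k|V(G)|$ edges implies $k$-linked. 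It cannot be derived from the $10k$-connectivity of $G$ for the reduced graph, and your step with the placeholder $\binom{?}{2}$ has no valid completion.

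Even granting a correct setup, your final routing step fails at exactly the threshold in question. At density $5k$ the argument yields a vertex $v$ with fewer than about $10k$ neighbours, each having at least about $5k$ neighbours inside $N(v)$. Thus $G[N(v)]$ has minimum degree only about half its order. Two nonadjacent vertices of $N(v)$ are then guaranteed only a constant number of common neighbours in $N(v)$. Greedy linking by disjoint $2$-paths must avoid the $2k$ Menger endpoints and the earlier midpoints, which needs minimum degree roughly $|N(v)|/2+3k/2$, so it breaks down. In fact, two copies of $K_{5k+1}$ sharing two vertices have order $10k$, minimum degree $5k$ and connectivity $2$. Hence these parameters alone cannot give a $k$-linkage inside $G[N(v)]$ for $k\ge 3$. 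Getting past this half-density barrier is where the real work of \cite{ThomasWollan2005} lies, and your sketch names it as the obstacle without resolving it.
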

We also use the following theorem, which yields a highly connected induced subgraph with controlled boundary.
\begin{theorem}[Penev, Thomass\'e and Trotignon \cite{PTT2016}]\label{PTT1}
Let \(k\) be a positive integer, and let \(G\) be a graph. If \(\delta(G)>2k^2-1\),
then \(G\) contains a \((k+1)\)-connected induced subgraph \(H\) such that
\(\partial(H)\subsetneq V(H)\) and \(|\partial(H)|\le 2k^2-1\), where 
\(
\partial(S):=\{v\in S: N(v)\setminus S\neq \emptyset\}
\)
is the boundary of \(S\).
\end{theorem}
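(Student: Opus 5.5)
The plan is a minimal-counterexample / extremal-set argument in the spirit of Mader's classical proof that large average degree forces highly connected subgraphs. Call a set $S\subseteq V(G)$ \emph{admissible} if $|\partial(S)|\le 2k^2-1$ and $S$ contains an \emph{interior} vertex, i.e.\ a vertex $v\in S\setminus\partial(S)$. Admissible sets exist, since $S=V(G)$ has $\partial(S)=\emptyset$. Choose an admissible $S$ of minimum size and put $H=G[S]$; the conditions $\partial(H)\subsetneq V(H)$ and $|\partial(H)|\le 2k^2-1$ then hold by definition. The key feature is that an interior vertex $v$ has $N_G(v)\subseteq S$, so $d_H(v)\ge\delta(G)\ge 2k^2$. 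In particular $|S|\ge 2k^2+1>k+1$, and it remains to show that $H$ is $(k+1)$-connected.

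Suppose not. Then there is a separation of $H$: a set $X\subseteq S$ with $|X|\le k$ and a partition $S\setminus X=A\cup B$ with $A,B\neq\emptyset$ and no edges between $A$ and $B$. Set $S_1=A\cup X$ and $S_2=B\cup X$. Because $A$ and $B$ are nonadjacent, a vertex of $A$ lies in $\partial(S_1)$ only if it already lies in $\partial(S)$. Hence
\[
\partial(S_1)\subseteq(\partial(S)\cap A)\cup X,\qquad \partial(S_2)\subseteq(\partial(S)\cap B)\cup X .
\]
Moreover, every interior vertex of $S$ lying in $A$ (respectively $B$) is still interior in $S_1$ (respectively $S_2$). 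The goal is to show that one of $S_1,S_2$, or a slight modification of one of them, is admissible. That contradicts the minimality of $|S|$, since $S_1,S_2\subsetneq S$.

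I would split into cases according to where the interior vertices lie.
\begin{enumerate}[label=\textup{(\alph*)}]
\item Suppose $A$ contains no interior vertex, so $A\subseteq\partial(S)$. Pass to $S'=S\setminus A=S_2$. Then $|\partial(S')|\le|\partial(S)|-|A|+|X\setminus\partial(S)|$, and $S'$ keeps every interior vertex of $B$. This handles $|A|\ge k$ directly.
\item Suppose both $A$ and $B$ contain interior vertices $a,b$. Then $N(a)\subseteq A\cup X$ and $N(b)\subseteq B\cup X$, so $|A|,|B|\ge 2k^2-k+1$. I would then choose the side with fewer old boundary vertices, say $|\partial(S)\cap A|\le\frac12|\partial(S)\setminus X|$. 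This gives $|\partial(S_1)|\le\frac12(2k^2-1)+k$, which is at most $2k^2-1$ as soon as $k\ge 2$. For $k=1$ a direct check suffices.
\end{enumerate}

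The main obstacle is the remaining degenerate configurations: for instance, a small side $A$ with $|A|<k$ consisting only of boundary vertices, or all interior vertices lying inside $X$. In these cases the naive bound $|\partial(S_i)|\le|\partial(S)\cap(\cdot)|+k$ can exceed $2k^2-1$.

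To handle these, I would first try to strengthen the extremal choice. One option is to minimize the lexicographic pair $(|S|,|\partial(S)|)$. Another is to minimize a potential such as $|S|-c\,|\partial(S)|$, or to require only $|\partial(S)|\le 2k^2-1$ while minimizing $|S|+|\partial(S)|$. The aim is that deleting a small all-boundary side $A$, and absorbing $X$ into the boundary, strictly decreases the potential.

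The slack needed for this should come from the degree bound $2k^2$, which is exactly twice the boundary budget. Double counting edges between interior vertices and $X$ shows that interior vertices cannot all concentrate near a separator of size $k$. The threshold $2k^2$ is chosen precisely so that this counting closes, and verifying it is where I expect the real work to be.
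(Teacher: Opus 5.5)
The paper only quotes this result from Penev, Thomass\'e and Trotignon and gives no proof, so your proposal has to stand on its own. It does not: the configuration you call degenerate is a genuine failure of your extremal choice, not a loose end. Take $k=2$, so $\delta(G)\ge 8$ and the boundary budget is $7$. Let $K$ be a clique on $m\ge 9$ vertices containing $x_1,x_2,r_1,\dots,r_6$, and give each $r_i$ one private outside neighbour $w_i$. Add a vertex $b$ adjacent to $x_1,x_2$ and to six new vertices $w_1',\dots,w_6'$. Let these twelve vertices be distinct vertices of an $8$-regular graph $W$ whose girth is much larger than $m$.

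Then $S=K\cup\{b\}$ has $\partial S=\{r_1,\dots,r_6,b\}$, with $x_1,x_2$ interior, so $S$ is admissible. One checks that every other admissible set is larger:
\begin{itemize}
\item a set not containing all of $K$ has no interior vertex in $K$;
\item if $b$ is interior but $K\not\subseteq T$, then $x_1,x_2,w_1',\dots,w_6'$ are all boundary;
\item $K$ alone has boundary $\{x_1,x_2,r_1,\dots,r_6\}$ of size $8$;
\item an admissible set with an interior vertex in $W$ must contain a cycle of $W$, because no tree with a vertex of degree at least $8$ has at most $7$ vertices of degree less than $8$.
\end{itemize}
Yet $\{x_1,x_2\}$ separates $b$ in $G[S]$, so the minimum admissible set is not $3$-connected. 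The lexicographic rule, minimizing $|S|+|\partial S|$ under the cap, and minimizing $|S|-c|\partial S|$ ($c\ge0$) under the cap all select this same $S$. No trade-off of $|S|$ against the raw count $|\partial S|$ can help: deleting $b$ frees one boundary slot but creates two.

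The missing idea is to cap a weight that charges each vertex for the edges it sends out. Put $\Phi(S)=\sum_{v\in S}\min\{|N(v)\setminus S|,k\}$, so that $|\partial S|\le\Phi(S)$. Take $S$ of minimum size with an interior vertex and $\Phi(S)\le 2k^2-1$. In the example $\Phi(K\cup\{b\})=8$, so the bad set is no longer admissible. For a separation $(A,X,B)$ of $G[S]$ with $|X|\le k$, let $\Phi_A,\Phi_B$ be the contributions of $A$ and $B$ to $\Phi(S)$. There are three cases.
\begin{itemize}
\item If $A$ and $B$ both contain interior vertices, then $\Phi(A\cup X)\le\Phi_A+k^2$ and $\Phi(B\cup X)\le\Phi_B+k^2$, while $\min\{\Phi_A,\Phi_B\}\le k^2-1$. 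This is where the bound $2k^2-1$ comes from.
\item If $A\subseteq\partial S$ and $B$ contains an interior vertex, passing to $B\cup X$ raises the contribution of $X$ by at most $k\min\{|A|,k\}$. On the other hand $\Phi_A\ge k\min\{|A|,k\}$. Indeed, if $|A|\le 2k^2-2k+1$, every $a\in A$ has at most $|A|+k-1\le 2k^2-k$ neighbours in $S$, hence at least $k$ outside since $d(a)\ge 2k^2$. Otherwise $\Phi_A\ge|A|\ge k^2$. So $\Phi(B\cup X)\le\Phi(S)$.
\item If every interior vertex lies in $X$, then $|S|\ge 2k^2+1$ gives $\Phi(S)\ge|A|+|B|\ge 2k^2-k+1$. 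Hence no vertex of $A\cup B$ can have $k$ outside neighbours, so each has at least $2k^2-k+1$ neighbours in $S$. This forces $|A|,|B|\ge 2k^2-2k+2$ and $\Phi(S)\ge 4k^2-4k+4>2k^2-1$, a contradiction.
\end{itemize}
These last two cases are where $\delta(G)\ge 2k^2$ is really used. The rest of your set-up carries over unchanged: an interior vertex gives $|S|\ge k+2$, and each side inherits the interior vertices it contains.
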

Applying Theorem \ref{PTT1} with \(k=q-1\), we immediately obtain the following corollary.
\begin{corollary}\label{PTT}
Let \(q\ge 2\), and let \(G\) be a graph with minimum degree at least \(4q^2\).
Then \(G\) contains a \(q\)-connected induced subgraph \(H\) with more than \(4q^2\)
vertices such that
\[
\bigl|\{v\in V(H):N_G(v)\setminus V(H)\neq \emptyset\}\bigr|\le 2q^2.
\]
\end{corollary}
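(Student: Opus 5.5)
Corollary~\ref{PTT} follows from Theorem~\ref{PTT1} with \(k=q-1\); the only work is checking that the hypotheses match and that the conclusions can be translated. Since \(q\ge 2\), we have \(k=q-1\ge 1\), so \(k\) is a positive integer.

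\emph{Hypothesis.} We need \(\delta(G)>2k^2-1=2(q-1)^2-1\). This holds because \(\delta(G)\ge 4q^2>2(q-1)^2-1\).

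\emph{Output.} Theorem~\ref{PTT1} then gives an induced subgraph \(H\) of \(G\) with three properties: it is \((k+1)=q\)-connected, it satisfies \(\partial(H)\subsetneq V(H)\), and
\[
|\partial(H)|\le 2(q-1)^2-1\le 2q^2.
\]
Here \(\partial(H)\) is taken inside \(G\), so \(\partial(H)\) is exactly the set \(\{v\in V(H):N_G(v)\setminus V(H)\neq\emptyset\}\) appearing in the corollary. This gives \(q\)-connectivity and the boundary bound.

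\emph{Order of \(H\).} It remains to show \(|V(H)|>4q^2\), and this uses the condition \(\partial(H)\subsetneq V(H)\). Because the inclusion is strict, there is a vertex \(v\in V(H)\setminus\partial(H)\). By the definition of the boundary, every neighbour of \(v\) in \(G\) lies in \(V(H)\). Therefore
\[
|V(H)|\ge 1+d_G(v)\ge 1+\delta(G)\ge 1+4q^2>4q^2.
\]

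This is the only step needing an idea beyond substitution, and it is immediate. Hence all three conclusions of the corollary hold.
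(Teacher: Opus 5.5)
Your proof is correct and follows the paper's route: the paper also obtains the corollary by applying Theorem~\ref{PTT1} with \(k=q-1\). You have filled in the routine checks, including the use of a vertex of \(V(H)\setminus\partial(H)\), whose \(\ge 4q^2\) neighbours all lie in \(H\), to get \(|V(H)|>4q^2\).
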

Finally, we need the following result to find subdivisions (not necessarily induced) in graphs of moderate girth.
\begin{theorem}[K\"uhn and Osthus {\cite{KuhnOsthus2006}}] \label{KO-topological-clique}
Let $r\ge 1$ be an integer. Every graph of minimum degree at least $r$ and girth at least $27$ contains a subdivision of $K_{r+1}$.
\end{theorem}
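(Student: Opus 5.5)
Since Theorem~\ref{KO-topological-clique} is quoted from \cite{KuhnOsthus2006}, I only sketch how I would prove it. The plan is to work in a contracted auxiliary graph and lift back through trees.

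\textbf{Small cases and reduction.} For $r\le 3$ the statement is classical. An edge handles $r=1$. Minimum degree $2$ gives a cycle, which is a subdivided $K_3$. Minimum degree $3$ forces a subdivided $K_4$ by Dirac's theorem. So assume $r\ge 4$, and pass to a minimal subgraph $G$ with $\delta(G)\ge r$.

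\textbf{Building the auxiliary graph.} Girth at least $27$ means every ball of radius $6$ is a tree. I would greedily choose a maximal family of vertices $v_1,\dots,v_m$ that are pairwise at distance at least $13$, say, and let $T_i$ be the ball of radius $2$ around $v_i$; this is a tree with at least $r(r-1)$ leaves. Every remaining vertex is then attached to the nearest $T_i$ along shortest paths, so that $V(G)$ is partitioned into connected pieces $B_i\supseteq T_i$, each inducing a tree. Contract each $B_i$ to obtain an auxiliary graph $H$. The large girth and the leaf count should give every $B_i$ at least about $r^2$ edges leaving it, to pairwise distinct pieces, so $H$ has average degree of order $r^2$ or more. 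This counting is the first routine check.

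\textbf{Finding the clique subdivision.} By Corollary~\ref{PTT}, $H$ contains a highly connected subgraph. Ideally I would get connectivity at least $10\binom{r+1}{2}$, whence Theorem~\ref{lem:TW} makes it $\binom{r+1}{2}$-linked. I then choose $r+1$ pieces as branch pieces, and pick $r$ distinct branches of each branch tree, one for each other branch piece. A linkage in $H$ joins the appropriate boundary neighbours pairwise by vertex-disjoint paths. Because these paths are vertex-disjoint in $H$, each non-branch piece is traversed by at most one path. Since each piece induces a tree, any two attachment points inside it can be joined within it. Inside a branch piece, the $r$ chosen branches lead disjointly from the centre $v_i$ to the required exits. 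Lifting everything to $G$ therefore yields a subdivision of $K_{r+1}$ with branch vertices $v_i$.

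\textbf{Main obstacle.} The crux is the degree bound. Minimum degree only $r$, rather than about $r^2$, must still force connectivity quadratic in $r$ in the contracted graph, and the leaf counting only gives roughly $r^2$ up to constants, not the constant $10$ needed above. I would first try enlarging the radius of the $T_i$ to $3$ or more, which is allowed by girth $27$, giving about $r^3$ leaves and so a quadratic margin. A second point needs care. Inside each branch piece, the paths to the different exits must use different branches at $v_i$. I would handle this by requiring the linkage endpoints at a branch piece to be neighbours of distinct subtrees rooted at the $r$ neighbours of $v_i$. Since each subtree has many leaves, the freedom in choosing these endpoints should make this possible.
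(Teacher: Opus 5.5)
The paper does not prove this theorem; it imports it from K\"uhn and Osthus and uses it as a black box. Judged on its own, your sketch has a reasonable overall architecture: contract, find a highly connected part, link, lift. But it has two genuine gaps, and they sit exactly where the difficulty lies.

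\textbf{First gap: the contracted graph $H$.} Your centres are pairwise at distance at least $13$ and chosen maximally, so the pieces $B_i$ can have radius up to $12$.
\begin{itemize}
\item Two edges between $B_i$ and $B_j$ then close a cycle of length at most $50$, which girth $27$ does not forbid. So the at least $(r-2)|B_i|+2$ edges leaving $B_i$ need not go to distinct pieces, and the simple graph $H$ can have small degree.
\item The $r(r-1)$ leaves of $T_i$ are interior to $B_i$: every vertex within distance $5$ of $v_i$ has all its neighbours in $B_i$. Counting those leaves therefore says nothing about neighbouring pieces.
\item The quantitative target is also off. Theorem~\ref{lem:TW} needs $H'$ minus the $r+1$ branch pieces to be $5r(r+1)$-connected, so $q\approx 5r^2$. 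Corollary~\ref{PTT} then requires $\delta(H)\ge 4q^2\approx 100r^4$, so ``$r^3$ with a quadratic margin'' does not suffice. Even Mader's linear connectivity bound needs an explicit $\Theta(r^2)$ average degree that already holds at $r=4$.
\item Enlarging the radius of the pieces is not free. Girth $27$ is a fixed budget, and larger pieces make multiple edges between pieces easier. Nothing in the sketch shows why $27$ is the right threshold.
\end{itemize}

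\textbf{Second gap, the more serious one: lifting at the branch pieces.} The highly connected subgraph $H'$ comes from a black box that gives no control over which neighbours of a given piece survive.
\begin{itemize}
\item Since $d_G(v_i)$ may equal $r$, each branch piece needs exits into pieces of $H'$ through all $r$ subtrees at $v_i$, with distinct endpoints across branch pieces.
\item $H'$ may keep only pieces attached to $B_i$ through one or two of those subtrees, and then the $r$ lifted paths cannot be internally disjoint at $v_i$.
\item Moving the branch vertex elsewhere in $B_i$ does not help in general. If the surviving attachments lie along a path in $B_i$, one per vertex, every vertex sees them in at most three directions.
\item Your ``freedom in choosing these endpoints'' is freedom only inside $H'$, which is exactly the uncontrolled part.
\end{itemize}
This is precisely the obstacle the present paper faces in proving Theorem~\ref{thm:avg-mader}. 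It first makes many vertices $Q$-robustly $a$-branchable (Definition~\ref{def:robust}), meaning each has at least $Q$ auxiliary neighbours in each of its $a$ directions. It then applies Lemma~\ref{lem:core}, which yields a $q$-connected induced subgraph in which many such vertices lose at most $2q^2$ auxiliary neighbours, so every direction keeps at least $7q^2$ choices. Your sketch needs an argument of this kind, or K\"uhn and Osthus's own. Without it, the linkage in $H'$ does not lift to a subdivision of $K_{r+1}$ in $G$.
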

The study of graphs with no induced subdivision of \(K_4\) was initiated in \cite{LevequeMaffrayTrotignon2012}. The case of induced subdivisions of \(K_4\) will be handled by the following theorem.
\begin{theorem}[Trotignon and Vu\v{s}kovi\'c {\cite{TrotignonVuskovic2017}}]\label{thm:TV-ISK4}
Every graph of girth at least \(5\) that does not contain an induced subdivision of $K_4$ has a vertex of degree at most \(2\).
\end{theorem}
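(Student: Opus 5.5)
The plan is a minimal-counterexample argument. Let \(G\) be a counterexample with \(|V(G)|\) minimum: \(g(G)\ge5\), \(G\) has no induced subdivision of \(K_4\), and \(\delta(G)\ge3\). To make the induction work through cutsets, I would first strengthen the statement: \emph{every such graph that is not a cycle or a single vertex has two nonadjacent vertices of degree at most \(2\).} In the version relative to a cutset, this becomes: for every clique \(K\) of size at most \(2\), there is a vertex of degree at most \(2\) outside \(K\). The base case is the class of graphs with no subdivision of \(K_4\) at all, not even a non-induced one. These are exactly the series-parallel graphs, and they have the required low-degree vertices by the classical ear-decomposition/\(2\)-degeneracy argument.

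Step 1: reduce to a \(3\)-connected-like situation. If \(G\) has a cutset \(X\) with \(|X|\le2\), split \(G\) into pieces \(G_1,G_2\). If \(X=\{a,b\}\) is nonadjacent, I would add to each piece a marker path from \(a\) to \(b\) of length at least \(3\). This keeps the girth at least \(5\). It also keeps each piece free of induced \(K_4\)-subdivisions, because the marker path can be replaced by an induced \(a,b\)-path through the other side. The strengthened hypothesis then gives, in the piece, a vertex of degree at most \(2\) that lies outside \(X\) and off the marker path. Its degree in \(G\) is the same, which contradicts \(\delta(G)\ge3\). Hence I may assume \(G\) is \(3\)-connected; with girth at least \(5\), every clique cutset is a cutset of size at most \(2\), so these are excluded too.

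Step 2: find a subdivision and make it induced. Since \(G\) is \(3\)-connected and not series-parallel, it contains a (possibly non-induced) subdivision \(H\) of \(K_4\). Choose \(H\) with \(|V(H)|\) minimal. Chords of \(H\), meaning edges of \(G[V(H)]\) outside \(H\), must be removed. Each chord either shortcuts one subdivided edge, which contradicts minimality, or joins two different paths. In the second case I would show that it creates a shorter subdivision, a wheel, or a theta. Here the local configurations need checking:
\begin{itemize}
\item A vertex with at least \(3\) neighbours on a hole yields an induced \(K_4\)-subdivision: take the three neighbours closest in cyclic order. Girth at least \(5\) guarantees the resulting subpaths have length at least \(2\) where needed.
\item A chord between two paths of a theta or prism gives such a hole-plus-vertex configuration.
\end{itemize}
So \(G[V(H)]=H\).

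Step 3 is the main obstacle: showing that \(H\) itself may be taken induced. After Step 2 this holds already, so the real work is pushed into Step 2's chord analysis. Equivalently, the difficulty is controlling how outside vertices attach once one works with a maximal induced structure (a maximal induced series-parallel subgraph, or a largest theta/prism). I would take a vertex \(v\notin V(H)\). Using \(3\)-connectivity, I would route three disjoint paths from \(v\) to \(H\). I would then case-split on where the attachment points lie: on one branch path, on two, or on three. Attachments on three distinct branches, or three attachments on a hole, yield an induced \(K_4\)-subdivision. Attachments confined to one branch contradict \(3\)-connectivity or minimality. This case analysis, the heart of Trotignon--Vu\v{s}kovi\'c's decomposition theorem, is where I expect most of the effort. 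In it, girth at least \(5\) serves to kill short configurations (triangles, \(C_4\)'s) that would otherwise block the reductions.
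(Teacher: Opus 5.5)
\textbf{There is a genuine gap.} The paper gives no proof of this statement; it quotes it from Trotignon and Vu\v{s}kovi\'c, where it is deduced from a structural decomposition theorem for triangle-free graphs with no induced subdivision of $K_4$. Your proposal does not replace that work, because its decisive step rests on a false lemma.

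\textbf{The Step~2 wheel lemma is false.} Your first bullet in Step~2 says that ``a vertex with at least $3$ neighbours on a hole yields an induced $K_4$-subdivision''. This fails once the vertex has $4$ or more neighbours. Let $C=x_1P_1x_2P_2x_3P_3x_4P_4x_1$ be a hole with each $P_i$ of length $3$, and add a vertex $v$ adjacent to exactly $x_1,\dots,x_4$. The resulting graph $W$ has girth $5$, yet it has no induced subdivision $F$ of $K_4$:
\begin{enumerate}[label=(\roman*)]
\item $F$ must contain $v$, since a cycle contains no $K_4$-subdivision;
\item if $V(C)\subseteq V(F)$, then $d_F(v)=4$, which is impossible;
\item otherwise $F-v$ lies inside a path, but deleting any vertex from a subdivision of $K_4$ leaves a cycle.
\end{enumerate}
The same argument shows that every (non-induced) $K_4$-subdivision of $W$ uses all of $V(W)$.

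This is exactly a configuration your chord analysis must handle. Take $H=C+vx_1+vx_2+vx_3$ with chord $vx_4$, which joins a branch vertex to an interior vertex of another branch path. Rerouting through the chord gives a $K_4$-subdivision on the same vertex set, so minimality of $|V(H)|$ gives no contradiction. The local configuration itself has no induced $K_4$-subdivision. Hence ``$G[V(H)]=H$'' is unproved. The same example refutes your Step~3 assertion that ``three attachments on a hole'' give an induced $K_4$-subdivision. Step~3, which you yourself call the heart of the matter, is only announced, not carried out.

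\textbf{What is missing.} You need a global argument that uses $3$-connectivity to locate an induced $K_4$-subdivision away from such wheels and other chorded configurations. That argument must use the absence of $4$-cycles in an essential way. Indeed, $K_{3,3}$ is $3$-connected, triangle-free, of minimum degree $3$, and has no induced $K_4$-subdivision. Its smallest $K_4$-subdivisions have $6$ vertices and $8$ edges, so each has a chord. Your sketch never indicates where girth $5$, rather than $4$, would enter.

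\textbf{A smaller gap in Step~1.} With a marker path $a\,m_1m_2\,b$, the marker's internal vertices have degree $2$. Also, $a$ or $b$ may have degree $2$ in the augmented piece if it has only one neighbour on that side. So neither form of your strengthened hypothesis yields a low-degree vertex outside $\{a,b\}$ and the marker:
\begin{itemize}
\item ``a low-degree vertex outside a clique'' could return $a$ or $b$ when the clique is $\{m_1,m_2\}$;
\item ``two nonadjacent low-degree vertices'' could return the pair $m_1,b$.
\end{itemize}
A sharper inductive statement, or an extremal choice of the $2$-cutset, is needed for this reduction.
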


\section{Structures of graphs without induced subdivisions of complete graphs}\label{sec:cleaning}
The first lemma allows us to find an induced subdivision of a complete graph in a very unbalanced bipartite graph.
\begin{lemma}\label{lem:unbalanced}
Let $d\ge2$, and let $G$ be a $d$-degenerate graph with $g(G)\ge54$. Suppose that $G$ has disjoint vertex sets $A,B$ such that $|A|> 60d^2|B|$ and $d_B(a)\ge2$ for every $a\in A$. Then $G$ contains an induced subdivision of $K_{d+1}$.
\end{lemma}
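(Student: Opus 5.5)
The plan is to build an auxiliary graph on $B$ and pass from a minor-type structure there to an induced subdivision in $G$. For each $a\in A$, pick two neighbours $b_a,b'_a\in B$ and regard $a$ as a path of length two between them. Let $H$ be the multigraph on $B$ with one edge $b_ab'_a$ for each $a\in A$.

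First I reduce the multiplicities. The girth condition $g(G)\ge 54>4$ means two distinct vertices $a,a'$ cannot have the same pair of neighbours, since that would create a $4$-cycle. Hence $H$ is in fact a simple graph. Counting edges then gives
\[
e(H)\ge|A|>60d^2|B|.
\]

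Next I want a sparse, clean substructure, which I would obtain by degeneracy. Apply Lemma~\ref{lem:avg-core} to $H$ to get a subgraph $H'\subseteq H$ with $\delta(H')\ge 60d^2$. An edge of $H'$ corresponds to a path $b\,a\,b'$ in $G$. These paths are internally disjoint, but they are \emph{not} induced in general, for two reasons.
\begin{enumerate}[label=\textup{(\alph*)}]
\item The middle vertices $a$ may have further neighbours in $B$ or in $A$.
\item The vertices of $B$ may be adjacent to each other in $G$.
\end{enumerate}
To control this I use that $G$ is $d$-degenerate, so $G[A\cup B]$ has fewer than $d|A\cup B|$ edges. Since every $a$ has $d_B(a)\ge 2$, most vertices $a$ have small degree. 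I would discard every $a$ with $d_{A\cup B}(a)>4d$, which loses at most a small fraction of $A$. For the $A$–$A$ edges among the survivors, I would take a random subset, keeping each $a$ with probability $p\approx 1/(10d)$. Alternatively, I would greedily choose an independent subset of the $d$-degenerate graph $G[A]$, which is $(d+1)$-colourable by Proposition~\ref{prop:degenerate-basic}. This keeps an independent set $A'$ with
\[
|A'|\ge|A|/(d+1)>30d|B|.
\]
Edges inside $B$ are handled the same way: choose $B'\subseteq B$ independent in $G$, colour by colour, keeping the $A'$-vertices with both chosen neighbours in $B'$. Averaging over pairs of colour classes loses a factor $(d+1)^2$. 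This is where the constant $60d^2$ is spent, and I expect the bookkeeping to close roughly as $60d^2\to$ average degree $>2d$ in the final auxiliary graph.

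After this cleaning we have independent sets $A''\subseteq A$ and $B''\subseteq B$, with each $a\in A''$ having exactly two designated neighbours in $B''$; its other neighbours in $B''$ are excess and remain an issue. The resulting auxiliary simple graph $H''$ on $B''$ has average degree greater than $2d$. By Lemma~\ref{lem:avg-core}, $H''$ has a subgraph of minimum degree at least $d+1$. Moreover, $g(H'')\ge 27$ because $g(G)\ge 54$: a cycle in $H''$ of length $\ell$ gives a cycle in $G$ of length $2\ell$. Theorem~\ref{KO-topological-clique} then yields a subdivision of $K_{d+2}\supseteq K_{d+1}$ in $H''$. Replacing each edge of $H''$ by its path of length two gives a subdivision $S$ of $K_{d+1}$ in $G$.

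The main obstacle is making $S$ induced. The branch and path vertices of $S$ lie in $A''\cup B''$, and both sets are independent, so the only possible stray edges run from a used middle vertex $a$ to a used vertex of $B''$ other than its two designated ones. To remove these, I would refine the choice of $H''$. Define the edge $b_ab'_a$ only when $d_{B''}(a)=2$ exactly, after a further random sparsification of $B''$. Keep each $b$ with probability $1/2$. A vertex $a$ with $d_B(a)\le 4d$ then keeps exactly two neighbours with probability at least about $\binom{4d}{2}^{-1}$-ish, a loss polynomial in $d$. I would check that this is still affordable against $60d^2$. If it is not, the fallback is to keep the vertex-disjointness from the subdivision and use the girth $54$ locally to reroute, replacing a stray edge $ab$ by a shortcut, which can only shorten paths.
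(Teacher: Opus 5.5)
Your endgame is the right one and matches the paper's: an independent set $R\subseteq B$, pairwise nonadjacent middle vertices whose neighbourhood in $R$ is exactly their designated pair, simplicity via the absence of $4$-cycles, girth at least $27$ for the auxiliary graph, then Lemma~\ref{lem:avg-core} and Theorem~\ref{KO-topological-clique}. The gap is the quantitative step, which is the real content of the lemma, and your bookkeeping does not close.

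You must turn $|A|/|B|>60d^2$ into an auxiliary graph with more than $d$ edges per vertex. So the total loss you can afford is a factor of order $d$, and your plan spends it twice:
\begin{itemize}
\item Your guarantee $|A'|\ge |A|/(d+1)$ for making $A$ independent up front (or order $|A|/d$ for the random variant) already spends a factor of order $d$, since $|B|$ does not shrink.
\item Cleaning $B$ spends another. To extract an independent set from the $d$-degenerate graph $G[B]$ by random sparsification you need a rate $p=O(1/d)$. A designated pair then survives with probability about $p^2$, while the vertex set shrinks only by a factor $p$, so edges per vertex drop by a further factor of order $d$.
\end{itemize}
In the worst case you end with $O(1)$ edges per vertex.

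Your concrete choices are worse still. A fixed colour class of $G[B]$ may contain no designated pair, and a union of two colour classes is not independent, so ``averaging over pairs of colour classes'' does not produce an independent $B'$. Keeping vertices with probability $1/2$ makes $N_{B''}(a)$ equal to a prescribed pair with probability $2^{-d_{B''}(a)}$, which can be as small as $2^{-4d}$, not $\binom{4d}{2}^{-1}$. The rerouting fallback also fails: a stray edge from the middle vertex of one branch path to a vertex of a different branch path cannot be shortcut without destroying the subdivision.

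The missing ideas are to clean $B$ in a single random step, and to get independence of the middle vertices for free \emph{afterwards}.
\begin{itemize}
\item Restrict to $A_0=\{a\in A:d_B(a)\le 4d\}$; by degeneracy, $|A_0|>5|A|/8$.
\item Keep each $b\in B$ with probability $p=1/(6d)$, and discard kept vertices having a kept right-neighbour in a $d$-degenerate ordering of $G[B]$. The resulting $R$ is independent with $\mathbb E|R|\le p|B|$.
\item Call $a\in A_0$ good if $N_R(a)=\pi(a)$. This has probability at least $p^2(1-p)^{6d}>1/(108d^2)$, so the expected number $X$ of good vertices is at least $5|A|/(864d^2)$.
\item The key observation: if $aa'\in E(G)$, then $\pi(a)\cap\pi(a')=\varnothing$ because $G$ has no triangles. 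So both are good with probability at most $p^4$, and the expected number $Y$ of edges of $G$ among good vertices is at most $d|A|p^4$, which is negligible.
\end{itemize}
Then $\mathbb E(X-Y-d|R|)>0$ with exactly the constant $60d^2$. Deleting one end of each such edge leaves more than $d|R|$ pairwise nonadjacent good vertices, which is what your endgame needs.
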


\begin{proof}
Let
\(
  A_0:=\{a\in A:d_B(a)\le4d\}.
\)
For each $a\in A\setminus A_0$ we have $d_B(a)>4d$, and thus
\(
4d\,|A\setminus A_0|<e(A,B)\le e(G[A\cup B])\le d(|A|+|B|).
\)
Therefore
\(
|A\setminus A_0|<(|A|+|B|)/{4}\le 3|A|/8,
\)
as $|A|\ge 60d^2|B|\ge 2|B|$. Hence $|A_0|>|A|-3|A|/8=5|A|/8$.

For each $a\in A_0$, fix two distinct vertices $u_a,v_a\in N_B(a)$ and let $\pi(a):=\{u_a,v_a\}$. Since $g(G)\ge 54$, $u_av_a\notin E(G)$. Now choose each vertex of $B$ independently at random with probability $p:=1/(6d)$. Fix a \(d\)-degenerate ordering of \(G[B]\), so that each vertex has at most \(d\) right-neighbors. Let $R$ be the set of chosen vertices with no chosen right-neighbor. Then $R$ is an independent set. Call $a\in A_0$ \textit{good} if $N_R(a)=\pi(a)$. Since $d_B(a)\le4d$ and each of $u_a,v_a$ has at most $d$ right-neighbors,
\[
  \Prb(a\text{ is good})
  \ge p^2(1-p)^{(d_B(a)-2)+2d}
  \ge p^2(1-p)^{6d}
  >\frac1{108d^2}.
\]
Let $X$ be the number of good vertices. Then
\(
  \E X\ge |A_0|/(108d^2)\ge 5|A|/(864d^2).
\)

Let $F$ be the auxiliary graph on the good vertices in which two vertices are adjacent if they are adjacent in $G[A_0]$, and let $Y=e(F)$. If $aa'\in E(G[A_0])$, then $\pi(a)\cap\pi(a')=\varnothing$; otherwise, $G$ contains a triangle. Hence $\Prb(\text{both $a$ and $a'$ are good})\le p^4$, and
\(
  \E Y\le e(G[A_0])p^4\le d|A|\cdot (1/(6d))^4=|A|/({1296d^3}).
\)
Since $\E|R|\le p|B|=|B|/(6d)$ and $|A|\ge60d^2|B|$, we have
\[
  \E(X-Y-d|R|)
  \ge \frac{5|A|}{864d^2}-\frac{|A|}{1296d^3}-\frac{|B|}{6}>0.
\]
Thus, for some choice of $R$, we have $X-Y>d|R|$. Hence $F$ contains an independent set $I$ with $|I|\ge X-Y>d|R|$. Let $H$ be the graph with vertex set $R$ in which, for each $a\in I$, we add the edge $u_av_a$. The graph $H$ is simple, for if two distinct vertices $a,a'\in I$ correspond to the same edge $uv$, then $uava'u$ would form a 4-cycle in $G$. Thus $|E(H)|=|I|>d|R|$ and so $d(H)>2d$.

Note that $g(H)\ge 27$. Indeed, any cycle $r_1r_2\cdots r_t r_1$ in $H$ gives a cycle
\(
r_1a_1r_2a_2\cdots r_ta_t r_1
\)
of length $2t$ in $G$. By Lemma~\ref{lem:avg-core}, $H$ contains an induced subgraph $H'$ with $\delta(H')\ge d$. Since $g(H')\ge27$, by Theorem~\ref{KO-topological-clique}, we obtain a subdivision of $K_{d+1}$ in $H'$. Since $R$ is independent, $I$ is independent, and every vertex of $I$ has exactly two neighbors in $R$, the $1$-subdivision of every subgraph of $H$ is an induced subgraph of $G[I\cup R]$. It follows that $G$ contains an induced subdivision of $K_{d+1}$.
\end{proof}

The next lemma gives the cleaning step used in the case \(d\ge 4\).

\begin{lemma}\label{lem:one-sided}
Let \(d\ge4\), and let \(G\) be a \(d\)-degenerate \(n\)-vertex graph with \(g(G)\ge54\) and no induced subdivision of \(K_{d+1}\). Let \(X\subseteq V(G)\) satisfy \(|X|\ge 81n/100\) and \(d_G(x)\ge d\) for every
\(x\in X\). Suppose that there is a set \(B_0\subseteq V(G)\setminus X\) such that every vertex of \(X\) has a neighbor in \(B_0\). Then there exist disjoint sets \(X'\subseteq X\) and \(Y\subseteq V(G)\setminus X'\) such that:
\begin{enumerate}[label=\textup{(\roman*)}]
\item \(|Y|\ge 10^{-12}n/(d^3(d+1))\);
\item \(Y\) is independent;
\item \(2\le |N(y)\cap X'|\le 3\cdot10^7d^4\) for every \(y\in Y\);
\item \(d_G(x')\le 800d\) for every \(x'\in X'\).
\end{enumerate}
\end{lemma}

\begin{proof}
Let \(\Delta_0:=800d\). Since \(G\) is \(d\)-degenerate, the sum of the degrees of \(V(G)\) is at most \(2dn\). Hence, there are at most \(n/400\) vertices that have degree larger than \(\Delta_0\). Choose \(Z\subseteq X\) consisting of vertices of degree at most \(\Delta_0\) such that
\(|Z|\ge (81/100-1/400)n=323n/400\). Choose each vertex of \(Z\) independently at random with probability \(1/2\), and let \(Z_1\) be the resulting set. Then
\(
\mathbb E\, e(Z_1,V(G)\setminus Z_1)
=\frac{1}{2}e(Z) + \frac{1}{2}e(Z,V(G)\setminus Z)
=\frac14\sum_{z\in Z}d_G(z)+\frac14 e(Z,V(G)\setminus Z).
\)
Every vertex of \(Z\) has a neighbor in \(B_0\subseteq V(G)\setminus X\), and therefore \(e(Z,V(G)\setminus Z)\ge |Z|\). Since \(d_G(z)\ge d\) for every \(z\in Z\), $\mathbb E\, e(Z_1,V(G)\setminus Z_1) \ge (d+1)|Z|/4$. Thus we may fix a choice \(Z_1\) such that
\(
e(Z_1,V(G)\setminus Z_1)\ge (d+1)|Z|/{4}
\ge 323(d+1)n/{1600}. 
\)

Let \(\kappa:=3\cdot10^7d^4\),
\(U:=\{u\in V(G)\setminus Z_1:d_{Z_1}(u)\ge2\}\) and
\(U':=\{u\in U:d_{Z_1}(u)>\kappa\}\). Then, we have \(|U'|<2dn/\kappa\).

\begin{claim}\label{c3.3}
\(|U\setminus U'|>10^{-12}n/d^3\).
\end{claim}
Suppose not. Then \(e(Z_1,U\setminus U')\le \kappa |U\setminus U'|\le 3\cdot10^{-5}dn\). Each vertex in
\(V(G)\setminus (Z_1\cup U)\) has at most one neighbor in \(Z_1\), and hence,
\[
\begin{aligned}
e(Z_1,U')-|Z_1|
&=e(Z_1,U)-e(Z_1,U\setminus U')-|Z_1|  \\
&\ge e(Z_1,V(G)\setminus Z_1)-n+|U|-e(Z_1,U\setminus U')  \\
&\ge \left(\frac{323(d+1)}{1600}-1-3\cdot10^{-5}d\right)n
  >\frac{n}{200}.
\end{aligned}
\]
Delete from \(Z_1\) every vertex with fewer than two neighbors in \(U'\), and let \(Z_1^*\) be the remaining set. Then \(e(Z_1^*,U')\geq e(Z_1,U')-|Z_1|>n/200\). Since every vertex of \(Z_1^*\) has degree at most \(\Delta_0\), it follows that
\(
|Z_1^*|>n/(160000d).
\)
Moreover, \(|U'|<2n/(3\cdot10^7d^3)\), and therefore
\(
\frac{|Z_1^*|}{|U'|}
>
\frac{3\cdot10^7}{320000}d^2
>
60d^2.
\)
Every vertex of \(Z_1^*\) has at least two neighbors in \(U'\). Applying
Lemma~\ref{lem:unbalanced} with \(A:=Z_1^*\) and \(B:=U'\), we obtain an
induced subdivision of \(K_{d+1}\), a contradiction. This proves
Claim~\ref{c3.3}. \hfill$\blacksquare$

\medskip
Since \(G[U\setminus U']\) is \(d\)-degenerate, it is \((d+1)\)-colorable. Let \(Y\) be a largest color class. Then \(Y\) is independent and, by Claim~\ref{c3.3},
\(
|Y|\ge \frac{|U\setminus U'|}{d+1}>
\frac{10^{-12}n}{d^3(d+1)}.
\)
Taking \(X':=Z_1\) completes the proof.
\end{proof}
\section{Induced version of Mader's result}\label{sec:robust}
In this section, we prove an induced analogue of a classical result of Mader. We begin with a lemma that extracts a highly connected induced subgraph while preserving many vertices of a prescribed set with almost all of their degrees.
\begin{lemma}\label{lem:core}
Let $k\ge2$, $D$ and $m$ be positive integers. Let $G$ be an $n$-vertex graph satisfying $\delta(G)\ge9k^2$, $\Delta(G)\le D$ and $g(G)\ge2m+2$. Let $B\subseteq V(G)$ satisfy $|B|\ge n/D$. Assume that $2(k^2-1)^m\ge 11k^2D^2$. Then $G$ contains a $k$-connected induced subgraph $H$ with $|V(H)|\ge 11k^2D^2$ such that
\[
  \bigl|\{x\in B\cap V(H):d_H(x)\ge d_G(x)-2k^2\}\bigr|
  \ge \frac{|V(H)|}{2D}
  \ge \frac{11}{2}k^2D.
\]
\end{lemma}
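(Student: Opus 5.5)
The plan is to extract many vertex-disjoint highly connected pieces from $G$ with Theorem~\ref{PTT1}, and then use averaging to show that one piece contains enough good $B$-vertices. A single application of Theorem~\ref{PTT1} gives a $k$-connected induced subgraph with few boundary vertices, but it could completely miss $B$. So I would not try to control one piece. Instead I would cover almost all of $V(G)$ by such pieces. Since $|B|\ge n/D$ and each piece loses only a small fraction of its vertices to boundary effects, some piece $H$ must contain at least $|V(H)|/(2D)$ vertices of $B$ that kept almost all their degree.

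\textbf{Step 1: greedy extraction.} Set $G_0:=G$. While the current graph $G_i$ is nonempty, first peel it: repeatedly delete any vertex $x$ with $d_{G_i}(x)<d_G(x)-2k^2$. If anything is left, the peeled graph has minimum degree at least $9k^2-2k^2=7k^2>2k^2-1$. Apply Theorem~\ref{PTT1} to it with parameter $k-1$. This gives a $k$-connected induced subgraph $H_i$ whose boundary $\partial_i$ (taken in the current graph) has size less than $2k^2$. Delete $V(H_i)$ and continue.

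\textbf{Step 2: properties of each piece.}
\begin{itemize}
\item A vertex $x\in V(H_i)\setminus\partial_i$ satisfies $d_{H_i}(x)=d_{G_i}(x)\ge d_G(x)-2k^2$. Hence every vertex of $B\cap V(H_i)\setminus\partial_i$ is good.
\item To bound $|V(H_i)|$ from below, I would run the Moore-type count of Lemma~\ref{lem:moore} from an interior vertex. Interior vertices have degree at least $7k^2$ inside $H_i$, only fewer than $2k^2$ vertices have smaller degree, and the girth is at least $2m+2$. Then the breadth-first tree of depth $m$ gives at least $2(k^2-1)^m\ge 11k^2D^2$ vertices, which is the stated bound.
\end{itemize}

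\textbf{Step 3: the accounting (main obstacle).} Deleting $H_i$ affects only vertices adjacent to $\partial_i$. There are at most $2k^2D$ such vertices, and together they lose at most $2k^2D$ units of degree. The danger is cascading in the peeling, since a vertex can be deleted only after losing more than $2k^2$ of its degree. I expect to handle this with a potential argument. Every peeled vertex takes with it more than $2k^2$ units of lost degree, while each extraction injects at most $2k^2D$ new units. The total lost degree decreases by at least $2k^2$ per peeled vertex, minus what that vertex itself passes on to its at most $D$ neighbours. I would try weighting the lost degree so that this bookkeeping becomes a genuine decrease; this is the delicate point, and I am least sure it closes as stated.

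If it does, the number of peeled vertices is at most $\sum_i O(D)\cdot|\partial_i|$, which is $O(k^2D)$ per piece. The boundary vertices number fewer than $2k^2$ per piece. Since each piece has at least $11k^2D^2$ vertices, the total number of vertices lost to boundaries or peeling is at most about $\frac{1}{5D}\sum_i|V(H_i)|\le \frac{n}{5D}$.

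\textbf{Step 4: conclusion.} Suppose every piece had fewer than $|V(H_i)|/(2D)$ good $B$-vertices. Then
\[
|B|<\sum_i\frac{|V(H_i)|}{2D}+\frac{n}{5D}\le\frac{n}{2D}+\frac{n}{5D}<\frac nD,
\]
a contradiction. So some $H=H_i$ works. Finally, $\frac{|V(H)|}{2D}\ge\frac{11}{2}k^2D$ follows from Step~2.
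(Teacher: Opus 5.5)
Your Step~3 is a genuine gap, and with the peeling rule of Step~1 it cannot be closed. Peeling a vertex $x$ changes the total lost degree by $d_G(x)-2\,\mathrm{lost}(x)$. This is positive unless $x$ has lost more than half of its edges. Your rule removes $x$ once it has lost $2k^2+1$ of its at least $9k^2$ edges, so peeling a vertex that has just crossed the threshold \emph{increases} the damage by $d_G(x)-4k^2-2\ge 5k^2-2$. Since degrees range over $[9k^2,D]$, no reweighting reverses this.

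The process is $(2k^2+1)$-neighbour bootstrap percolation below half the degree, and in that regime a boundary-sized seed can destroy an arbitrarily large part of the graph. For example, take a high-girth part $F$ with levels $L_0,\dots,L_M$, where every vertex of $L_j$ has at least $2k^2+1$ neighbours in $L_{j-1}$. Let each vertex of $L_0$ have at least $2k^2+1$ neighbours among the few boundary vertices of other $k$-connected pieces. Theorem~\ref{PTT1} gives you no control over which subgraph it returns, so it may return exactly those pieces. Then $L_0,L_1,\dots,L_M$ are peeled in turn. With $B=V(F)$ and $M$ large, no extracted piece contains any vertex of $B$, and Step~4 fails. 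Insisting that \emph{every} surviving vertex keeps all but $2k^2$ of its degree is too fragile to maintain by peeling.

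The missing idea is to separate the peeling that keeps Theorem~\ref{PTT1} applicable from the degree preservation needed at the end. The paper peels with an \emph{absolute} threshold well below $\delta(G)$: after removing each block it deletes vertices of current degree less than $4k^2$, which is all Corollary~\ref{PTT} needs. A peeled vertex then has more than $5k^2$ neighbours deleted before it, each in $Z$ or in the boundary $S_i$ of an earlier block. Orienting $G[Z]$ towards the later-deleted endpoint gives out-degrees below $4k^2$. Hence $e(G[Z])<4k^2|Z|$, so $e(Z,S)>k^2|Z|$ and $|Z|\le D|S|/k^2$. The Moore bound also gives $|S|\le 2k^2\tau\le 2n/(11D^2)$.

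Goodness is recovered afterwards by discarding $W=S\cup Z\cup N_G(S)\cup Q$, where $Q$ is the set of vertices outside $S\cup Z$ with more than $2k^2$ neighbours in $Z$. Each $z\in Z$ had fewer than $4k^2$ neighbours present when deleted, so $|Q|<2|Z|$ and $|W|<2D|S|<n/(2D)$. A vertex of $B\cap V(H_t)$ outside $W$ has all its neighbours outside $H_t$ in $Z$, hence at most $2k^2$ of them. Your averaging in Step~4 then finishes the proof. Your use of Theorem~\ref{PTT1} and the pruned Moore count in Step~2 are fine.
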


\begin{proof}
Set $T:=4k^2$. We iteratively construct pairwise vertex-disjoint induced subgraphs $H_1,\ldots,H_\tau$. Let $G_1:=G$. For $t\geq 1$, if $G_t$ is non-empty, then by construction $\delta(G_t)\ge T$. By Corollary~\ref{PTT}, we can find a $k$-connected induced subgraph $H_t\subseteq G_t$ with $|V(H_t)|>4k^2$ and boundary
\(
S_t:=\{u\in V(H_t):N_{G_t}(u)\setminus V(H_t)\neq\varnothing\},
\)
where $|S_t|\le 2k^2$. Delete $V(H_t)$ and delete vertices with degree less than $T$ in the current graph until no more such vertices remain. Let $Z_t$ be the set of deleted vertices in the current step, and write
\(
  G_{t+1}:=G_t\setminus (V(H_t)\cup Z_t).
\)
The process stops when the remaining graph is empty. Let $S:=\bigcup_{t=1}^\tau S_t$ and $Z:=\bigcup_{t=1}^\tau Z_t$.

Orient each edge of $G[Z]$ toward the end vertex deleted later. Since every vertex of $Z$ had fewer than $T$ neighbors when it was deleted, every vertex has fewer than $4k^2$ out-neighbors in this orientation, and hence $e(G[Z])<4k^2|Z|$. Since \(d_G(v)\ge 9k^2\) for every \(v\in Z\), each vertex \(v\in Z\) has more than \(5k^2\) neighbors among previously deleted vertices. If such a neighbor lies in an earlier block $H_i$, then it must lie in $S_i$. Hence
\(
  e(Z,S)+e(G[Z])>5k^2|Z|.
\)
Thus, $e(Z,S)>k^2|Z|$. Since $\Delta(G)\le D$, we have $|Z|\le D|S|/{k^2}$.

For each $t$, write $n_t:=|V(H_t)|$. If $u\in V(H_t)\setminus S_t$, then $d_{H_t}(u)=d_{G_t}(u)\ge4k^2$. If $u\in S_t$, then $d_{H_t}(u)\ge k$ as $H_t$ is $k$-connected. Since \( |S_t|\le 2k^2<n_t/2 \), we have $2e(H_t)\ge (n_t-|S_t|)4k^2+|S_t|k  =4k^2n_t-|S_t|(4k^2-k) >4k^2n_t-n_t(4k^2-k)/{2} 
>2k^2 n_t$. Thus \(d(H_t)>2k^2\). By Lemma \ref{lem:avg-core}, $H_t$ contains an induced subgraph $J_t$ with $\delta(J_t)\ge k^2$. By Lemma~\ref{lem:moore} and our assumption,
\(
  |V(H_t)|\ge |V(J_t)|\ge 2\sum_{i=0}^m (k^2-1)^i\ge2(k^2-1)^m\ge 11k^2D^2.
\)
Thus $\tau\le n/(11k^2D^2)$, and $|S|\le2k^2\tau\le 2n/(11D^2)$. Also, we have $e(Z,V(G)\setminus(S\cup Z))<4k^2|Z|$.
Let
\(
  Q:=\{v\in V(G)\setminus(S\cup Z):d_Z(v)>2k^2\}.
\)
Hence, $|Q|<2|Z|$. Let
  $W:=S\cup Z\cup N_G(S)\cup Q$
and
  $B':=B\setminus W$.
Thus, 
\(
  |W|\le |S|+|Z|+D|S|+|Q|
  <(D+1)|S|+3|Z|
  \le\left(D+1+3D/{k^2}\right)|S|
  \le2D|S|<n/{2D}.
\)
Since $|B|\ge n/D$, it follows that $|B'|\ge n/(2D)$.

Note that $H_1,\ldots,H_\tau$ partition $V(G)\setminus Z$ and $B'\cap Z=\varnothing$. By averaging, we have, for some $t$,
\(
  \frac{|B'\cap V(H_t)|}{|V(H_t)|}\ge \frac{|B'|}{n}\ge\frac1{2D}.
\)
Let $H := H_t$, satisfying $|V(H)| \ge 11k^2D^2$. For any $x \in B' \cap V(H)$, $x \notin W$ implies $x \notin S_t \cup N_G(S)$. Consequently, $x$ cannot have neighbors in $G_t \setminus H$ and no neighbor in any earlier block. Thus, all neighbors of $x$ outside $H$ must lie in $Z$. Since $x \notin Q$, we obtain $d_H(x) \ge d_G(x) - 2k^2$. This completes the proof.
\end{proof}
We also need the following definition.
\begin{definition}\label{def:robust}
Let \(G\) be a graph, and let \(H\) be an auxiliary graph with $V(H)\subseteq V(G)$. Suppose that for each edge \(uv\in E(H)\), an induced path \(P_{uv}\) in \(G\) is fixed. Let \(a,q\ge 1\) be integers. A vertex \(y\in V(H)\) is called \emph{\(q\)-robustly \(a\)-branchable} if there exist distinct vertices
\(
  z_1(y),\ldots,z_a(y)\in N_G(y)
\)
and pairwise disjoint sets
\(
  M_1(y),\ldots,M_a(y)\subseteq N_H(y)
\)
such that
\begin{enumerate}[label=\textup{(\roman*)}]
  \item $|M_i(y)|\ge q$ for every $i\in[a]$;
  \item $z_i(y)\in V(P_{yy'})$ for every $i\in[a]$ and every $y'\in M_i(y)$;
  \item for every choice of $y_i\in M_i(y)$, $i\in[a]$, the paths
  \(
    P_{yy_1},\ldots,P_{yy_a}
  \)
  are pairwise internally vertex-disjoint and pairwise anticomplete in $G-y$.
\end{enumerate}
\end{definition}
Now we prove Theorem \ref{induced Mader}, which is an induced analogue of a classical result of Mader. For convenience, we restate it here.
\begin{theorem}\label{thm:avg-mader}
For every integer $s\ge4$, every $\eta>0$, and every integer $D\ge s-1$, there is an integer $g_{s,\eta,D}$ such that every graph $J$ satisfying
$\Delta(J)\le D$, $d(J)>s-2+\eta$ and $g(J)\ge g_{s,\eta,D}$
contains an induced subdivision of $K_s$.
\end{theorem}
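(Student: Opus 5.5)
We will reduce to an auxiliary graph $H$ on few vertices of $J$ whose edges correspond to induced paths, apply Theorem~\ref{ave-thm} (or a linkage argument) inside $H$, and then certify inducedness using the robust branchability of Definition~\ref{def:robust}.

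\emph{Step 1 (preprocessing).} Since $d(J)>s-2+\eta$ and $\Delta(J)\le D$, we first pass to a subgraph in which a positive proportion of vertices have degree at least $s-1$. Deleting vertices of degree at most $1$ repeatedly only increases the average degree. Hence we may assume $\delta(J)\ge 2$.

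\emph{Step 2 (contracting to a sparse auxiliary graph).} The plan is a Mader-style contraction. Let $B$ be the set of vertices of degree at least $3$ (the branch candidates). Suppressing degree-$2$ vertices gives a multigraph on $B$ whose edges are induced paths (\emph{threads}) of $J$. The average degree stays above $s-2+\eta$, because threads contribute weight $1$ each while degree-$2$ vertices only dilute the count. The large girth of $J$ makes these threads long, or the suppressed graph has large girth in the appropriate weighted sense.

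We then fix a radius $r$ and, by a random or greedy choice, select a set $Y\subseteq B$ of centres pairwise at distance at least $10r$ in $J$. For $y,y'\in Y$ we join $y$ and $y'$ in $H$ via a shortest path $P_{yy'}$ between their balls. Because $\Delta\le D$ and the girth exceeds $g_{s,\eta,D}\gg r$, each $r$-ball is a tree of bounded size. The goal is that $H$ has average degree $>s-2+\eta/2$ and large girth.

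Robust branchability should come from the tree structure. The neighbours $z_1(y),\dots,z_a(y)$ of $y$ lead into disjoint subtrees, and paths leaving through different subtrees are anticomplete near $y$ by the girth bound. Far from $y$, anticompleteness is to be forced by choosing $Y$ sparse via the Lov\'asz Local Lemma (Lemma~\ref{LLL}), which is possible since all relevant events depend only on bounded-radius neighbourhoods when $\Delta\le D$. We aim for every $y\in Y$ with $d_J(y)\ge s-1$ to be $q$-robustly $(s-1)$-branchable for large $q$.

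\emph{Step 3 (finding the subdivision).} Apply Lemma~\ref{lem:core} to (a core of) $H$ to obtain a highly connected induced piece containing many robustly branchable vertices that retain almost all of their degree. Choose $s$ such vertices $y^1,\dots,y^s$ as branch vertices. For each pair $y^i,y^j$ we need a connection that uses distinct branches $z_\cdot(y^i)$ and $z_\cdot(y^j)$. We then route these connections through vertex-disjoint paths in $H$ using Theorem~\ref{lem:TW}: high connectivity gives linkage, and the sets $M_\ell(y^i)$, each of size at least $q$, provide flexible endpoints. Finally, replace each $H$-edge by its path $P$ and shortcut the result to an induced path. Any chord between two distinct routes would create either a short cycle or a violation of the anticompleteness guaranteed in Step 2. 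This is controlled because the routes stay at mutual distance at least $2$, by the same local-lemma sparsification.

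The main obstacle is Step 2. We must retain average degree just above $s-2$, with no constant-factor loss, while simultaneously obtaining anticompleteness of all the chosen paths. Mader's threshold $s-2+\eta$ is tight, so any contraction must lose only $o(\eta)$ in average degree. The first thing to try is the Koml\'os--Szemer\'edi robust expansion with ball radius growing with the girth: it keeps the density loss at $\eta/2$, while the LLL sparsification handles inducedness.
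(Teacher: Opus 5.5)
Your Step~3 cannot run on the graph that Step~2 is designed to produce. Lemma~\ref{lem:core} requires $\delta(H)\ge 9q^2$, and the linkage step via Theorem~\ref{lem:TW} requires $q-s\ge 10\binom{s}{2}$, so $H$ needs minimum degree of order $s^4$. A graph of average degree just above $s-2+\eta/2$ need not contain any subgraph of such minimum degree. The fallback of applying Theorem~\ref{ave-thm} inside $H$ fails too. It gives a $K_s$-subdivision in $H$, but with no control over which neighbour $z_i(y)$ the $H$-edges at a branch vertex $y$ leave through, so the lifted paths may share $z_i(y)$ or be adjacent near $y$.

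Your ``main obstacle'' is a misdiagnosis: nothing forces you to preserve the density $s-2+\eta$ through the contraction, and in the paper the contraction makes $H$ enormously dense. The missing idea is expansion. Since $s\ge 4$, $\alpha:=(s-2)/2+\eta/4>1$. Pass to a vertex-minimal induced subgraph $J$ with $e(J)>\alpha|J|$. Then $e(X,V(J)\setminus X)>(\alpha-1)|X|+c(X)$ for every $X$ inducing a forest. Applied to the branch of the ball around $y$ through a neighbour $z$, this gives $|X_{i+1}|>\alpha|X_i|$, so every direction out of every vertex reaches more than $(\alpha-1)\alpha^{\ell-3}$ vertices at distance $\ell-1$. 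Each such vertex yields an edge of $H^*$ at $y$ whose path passes through $z$. That is why every sampled centre ends up with at least $Q=9q^2$ $H$-neighbours in each direction. The exact threshold $s-2+\eta$ is needed only to obtain, using $\Delta\le D$, a linear-size set $U$ of vertices of degree at least $s-1$, which are the candidates for robust $(s-1)$-branchability.

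Deleting vertices of degree at most $1$ does not give you this expansion. Long threads of degree-$2$ vertices can survive, and then a direction $z_i(y)$ need not reach $q$ centres within the chosen radius. In the minimal subgraph, by contrast, such a thread has fewer than $1/(\alpha-1)$ interior vertices.

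Your inducedness mechanism is also too vague to work as stated. Far-apart centres do not make shortest paths ``between their balls'' anticomplete, since such a path can run next to a third centre or next to another path. Sparsifying the set of centres alone does not prevent this. The paper instead:
\begin{itemize}
\item takes a nearest-root BFS forest $\{T_w\}$ on a maximal set of vertices with pairwise distances greater than $2\ell$;
\item lets $P_{uv}$ be the unique $u$--$v$ path in $J[T_u\cup T_v]$;
\item samples roots with probability $p$;
\item keeps $uv$ only if no other sampled root's tree meets or is adjacent to $P_{uv}$.
\end{itemize}
This makes the paths of vertex-disjoint $H$-edges anticomplete deterministically, and robust branchability handles the edges at a branch vertex. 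The local lemma is used only to ensure that every sampled $y$ keeps at least $Q$ edges through each neighbour $z$. That step needs the girth fact that the $F$-neighbourhoods of the edges $e_\lambda$, $\lambda\in L_z(y)$, pairwise meet only in $y$. This makes their survival events conditionally independent given $y\in S$, so Chernoff applies.
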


\begin{proof}
Let $a:=s-1$, $\alpha:=(s-2)/2+\eta/4$ and $\gamma:=\alpha-1$. Let $q:=10a^2+a+1$ and $Q:=9q^2$. Let $L:=4\ell+1$, $C:=(L+1)(D+1)$, $p:=1/(C+1)$, $c_0:=\eta/(4(D-s+2)D^{2\ell})$, $\mu_\ell:=\gamma\alpha^{\ell-3}/(e(C+1))$ and $\pi:=\exp(-\mu_\ell/8)$. Choose $\ell$ sufficiently large such that $\mu_\ell\ge 18Q$, $\pi\le 1/({e(4D^{12\ell+5}+1)})$ and $2eD\pi<pc_0/{8}$.
This is possible since $\alpha>1$. Define
\(
D_0:=\left\lceil
\max\left\{2/{pc_0},\,2D^{2\ell+1},\,Q,\,D\right\}
\right\rceil .
\) Choose $m$ sufficiently large so that $2(q^2-1)^m \ge 11q^2 D_0^2$.
Let $g_{s,\eta,D}:=\max\{12\ell+5,L(2m+2)\}$.

Suppose that $J_0$ is a counterexample. Since
$d(J_0)>s-2+\eta$, we have $e(J_0)>\alpha |J_0|$. Passing to a vertex-minimal induced subgraph \(J\subseteq J_0\) with \(e(J)>\alpha |J|\), we may assume that \(J\) is minimal with this property. Thus, $e(J-X)\le \alpha(|J|-|X|)$ for every non-empty proper set $X\subseteq V(J)$. Taking $X=\{v\}$, we have $d_J(v)>\alpha>1$, and hence $\delta(J)\ge 2$. Let $U:=\{v\in V(J):d_J(v)\ge a\}$. Note that $\sum_{v\in V(J)}(d_J(v)-(s-2))=2e(J)-(s-2)|J|>\eta |J|/2$. Since only vertices of $U$ contribute positively to this sum, and each contributes at
most $D-(s-2)$, we have $|U|\ge \eta|J|/({2(D-s+2)})$. Also note that $g(J) \ge g(J_0) \ge g_{s,\eta,D}$.

For every non-empty proper set \(X\subseteq V(J)\) such that \(J[X]\) is a forest with \(c(X)\) components, we have $e(J[X])=|X|-c(X)$. Thus, we have $e(X,V(J)\setminus X)=e(J)-e(J-X)-e(J[X])>\alpha|J|-\alpha(|J|-|X|)-|X|+c(X) =\gamma |X|+c(X)$.
Fix $y\in V(J)$ and $z\in N_J(y)$. For $i\ge1$, let $X_i=X_i(y,z)$ be the vertex set of the component of
$J[\{v:1\le \operatorname{dist}_J(y,v)\le i\}]$ containing $z$. If $i\le \ell-2$, then $J[X_i]$ is a tree. Since \(zy\in E(J)\) and \(y\notin X_i\), one edge joining \(X_i\) to \(V(J)\setminus X_i\) is \(zy\). Every other such edge has one of its end vertices in \(X_{i+1}\setminus X_i\), and these end vertices are pairwise distinct by the girth assumption. Therefore \(|X_{i+1}\setminus X_i|>\gamma |X_i|\), and so \(|X_{i+1}|>\alpha |X_i|\). Write \(L_z(y):=X_{\ell-1}\setminus X_{\ell-2}\). Thus, $|L_z(y)|>\gamma \alpha^{\ell-3}$.

Take a maximal set of vertices \(U'\subseteq U\) with pairwise distance greater than \(2\ell\). Since \(\Delta(J)\le D\), every ball of radius \(2\ell\) has fewer than \(2D^{2\ell}\) vertices. Thus, $|U'|\geq |U|/(2D^{2\ell})\geq c_0|J|$. Extend \(U'\) to a maximal set \(S^*\subseteq V(J)\) whose vertices have pairwise distance greater than \(2\ell\). Fix a nearest-root breadth-first forest rooted at \(S^*\). For each \(w\in S^*\), let \(T_w\) denote the tree rooted at \(w\). Since each \(J[T_w]\) is an induced tree and there is at most one edge between two distinct trees, for every pair \(u,v\in S^*\) with an edge between \(T_u\) and \(T_v\), there is a unique \(u\)-\(v\) path in \(J[T_u\cup T_v]\). Fix this path as \(P_{uv}\).

Define $H^*$ on vertex set $S^*$ by joining $u,v$ whenever there is a path $P_{uv}$ of length at most $L$ contained in $T_u\cup T_v$. Define a bipartite graph $F$ with parts $E(H^*)$ and $S^*$ by joining $e\in E(H^*)$ to $w\in S^*$ if $P_e$ meets $T_w$ or some vertex of $P_e$ has a neighbor in $T_w$. Since $|V(P_e)|\le L+1$, we have $|N_F(e)|\le C$ for every $e\in E(H^*)$.

Fix \(y\in S^*\) and \(z\in N_J(y)\). First note that \(L_z(y)\subseteq T_y\). For each \(\lambda\in L_z(y)\), choose a leaf \(x_\lambda\) of \(J[T_y]\) such that \(\lambda\in P_{yx_\lambda}\). Since \(\delta(J)\ge2\), \(x_\lambda\) has a neighbor \(r_\lambda\notin T_y\). Let \(w_\lambda\in S^*\) be the root such that \(r_\lambda\in T_{w_\lambda}\). The path from \(y\) to \(x_\lambda\), followed by the edge \(x_\lambda r_\lambda\) and the tree path in \(T_{w_\lambda}\) from \(r_\lambda\) to \(w_\lambda\), has length at most \(4\ell+1=L\). Thus \(e_\lambda:=yw_\lambda\in E(H^*)\). Moreover, since \(\lambda\in L_z(y)\) and \(x_\lambda\) is chosen in the descendant
subtree of \(\lambda\), the path \(P_{e_\lambda}\) starts at \(y\) and
\(z\in V(P_{e_\lambda})\). The construction is illustrated in Figure~\ref{fig:jctb_style_subdivision}.

\begin{figure}[htbp]
    \centering
    \begin{tikzpicture}[
        scale=0.8,
        vtx/.style={circle, draw=black, fill=black, inner sep=0pt, minimum size=4.5pt, thick},
        tree_y_style/.style={draw=jctbBlue!70, thick, top color=jctbBlue!15, bottom color=bgBlue, rounded corners=6mm},
        tree_w_style/.style={draw=jctbOrange!70, thick, top color=jctbOrange!15, bottom color=bgOrange, rounded corners=6mm},
        phys_edge/.style={draw=pathBlack, line width=1.2pt},
        phys_path/.style={draw=pathBlack, line width=1.2pt, dashed, dash pattern=on 4pt off 2pt},
        logic_edge/.style={draw=logicRed, line width=1.3pt, dash pattern=on 5pt off 3pt, -{Stealth[length=3.5mm]}},
        level_guide/.style={draw=black!25, thin, dash pattern=on 2pt off 2pt}
    ]
        \begin{scope}

            \draw[tree_y_style] (0, 4.5) -- (-2.6, 0) -- (2.6, 0) -- cycle;
            \node[text=jctbBlue!90!black, font=\large\bfseries] at (-1.8, 0.5) {$T_y$};

            \node[vtx, label=above:{$y \in S^*$}] (y) at (0, 4.5) {};
            \node[vtx, label=left:{$z$}] (z) at (0, 3.6) {};
            \node[vtx, label=left:{$\lambda$}] (lam) at (0.8, 1.8) {};
            \node[vtx, label=below:{$x_\lambda$ (leaf)}] (xlam) at (1.5, 0) {};

            \draw[level_guide] (-1.6, 1.8) -- (1.6, 1.8) node[right, text=black!50, font=\small] {$L_z(y)$};

            \draw[phys_edge] (y) -- (z);
            \draw[phys_path] (z) .. controls (0, 2.8) and (0.5, 2.2) .. (lam);
            \draw[phys_path] (lam) .. controls (1.1, 1.2) and (1.3, 0.6) .. (xlam);
        \end{scope}

        \begin{scope}[xshift=6.5cm]

            \draw[tree_w_style] (0, 4.5) -- (-2.6, 0) -- (2.6, 0) -- cycle;
            \node[text=jctbOrange!90!black, font=\large\bfseries] at (1.8, 0.5) {$T_{w_\lambda}$};

            \node[vtx, label=above:{$w_\lambda \in S^*$}] (wlam) at (0, 4.5) {};
            \node[vtx, label={[xshift=1.2cm, yshift=-0.5cm]$r_\lambda \notin T_y$}] (rlam) at (-1.8, 0.8) {};

            \draw[phys_path] (rlam) .. controls (-1.2, 2) and (-0.5, 3) .. (wlam);
        \end{scope}

        \draw[phys_edge] (xlam) -- (rlam) 
            node[midway, below, yshift=-4pt, text=black!70, font=\footnotesize]{};

        \begin{pgfonlayer}{background}
            \draw[logic_edge] (y) to[bend left=35] 
                node[above, midway, text=logicRed!80!black, font=\bfseries, yshift=2pt] 
                {$e_\lambda := yw_\lambda \in E(H^*)$} 
                (wlam);
        \end{pgfonlayer}

    \end{tikzpicture}
    
    \vspace{0.4cm}
    \caption{\small \textit{Local construction of the path \(P_{e_\lambda}\) and the corresponding edge \(e_\lambda\) in the auxiliary graph \(H^*\).}}
    \label{fig:jctb_style_subdivision}
\end{figure}

\begin{claim}\label{c4.3}
Let \(y\in S^*\), \(z\in N_J(y)\) and let \(\lambda,\lambda'\in L_z(y)\) be distinct. Then $N_F(e_\lambda)\cap N_F(e_{\lambda'})=\{y\}$.
\end{claim}

Suppose that \(w\in N_F(e_\lambda)\cap N_F(e_{\lambda'})\setminus\{y\}\). For each \(\xi\in\{\lambda,\lambda'\}\), there exist a vertex \(a_\xi\in V(P_{e_\xi})\) and an \(a_\xi\)-\(w\) path \(Q_\xi\) of length at most \(2\ell+1\) whose internal vertices lie in \(T_w\). Let \(u\) be the last common vertex of the \(y\)-\(\lambda\) and \(y\)-\(\lambda'\) paths in the tree \(T_y\). Since \(\lambda,\lambda'\in L_z(y)\) are distinct, we have \(\dist_J(y,u)\le\ell-2\). $y P_{e_\lambda} a_\lambda Q_\lambda w$ and $y P_{e_\lambda'} a_{\lambda'} Q_{\lambda'} w$ are therefore two distinct \(y\)-\(w\) walks. Each has length at most \(L+(2\ell+1)=6\ell+2\), so their union contains a cycle of length at most \(12\ell+4\), a contradiction to the girth assumption. This proves Claim \ref{c4.3}. \hfill$\blacksquare$

\medskip
Choose each vertex of $S^*$ independently with probability $p$, and let $S$ be the chosen set. Let \(H\) be the spanning subgraph of \(H^*[S]\) whose edge set consists of all \(uv\in E(H^*)\) such that \(N_F(uv)\cap S=\{u,v\}\). If \(e,f\in E(H)\) are vertex-disjoint, then \(P_e\) and \(P_f\) are vertex-disjoint and anticomplete in \(J\). For \(y\in S^*\) and \(z\in N_J(y)\), let \(E_{y,z}\) be the event that \(y\in S\) and fewer than \(Q\) edges \(yy'\in E(H)\) satisfy \(z\in V(P_{yy'})\).
\begin{claim} \label{c4.4}
$\mathbb P(E_{y,z})\le \exp(-\mu_\ell/8)=\pi$.
\end{claim}

For \(\lambda\in L_z(y)\), write \(I_\lambda:=\mathbf 1_{\{e_\lambda\in E(H)\}}\) and \(Z_{y,z}:=\sum_{\lambda\in L_z(y)} I_\lambda\). It suffices to show that $\mathbb P(Z_{y,z}<Q\mid y\in S) \le \pi$ since, conditional on \(y\in S\), the event \(E_{y,z}\) is contained in \(\{Z_{y,z}<Q\}\). For \(\lambda\in L_z(y)\), the event \(\{I_\lambda=1\}\) is determined by the vertices of \(N_F(e_\lambda)\setminus\{y\}\) that lie in \(S\). By Claim \ref{c4.3}, the random variables \(I_\lambda\), \(\lambda\in L_z(y)\), are mutually independent conditional on \(y\in S\). Moreover, if \(e_\lambda=yw_\lambda\), then
\[
\mathbb P(I_\lambda=1\mid y\in S)=p(1-p)^{|N_F(e_\lambda)|-2}\ge p(1-p)^C>\frac{1}{e(C+1)}.
\]
Consequently, \(\mathbb E(Z_{y,z}\mid y\in S)>\mu_\ell\). By assumption, \(\mu_\ell\ge 18Q\). Let \(m_{y,z}:=\mathbb E(Z_{y,z}\mid y\in S)\). Then we have \(Q\le m_{y,z}/18\). Thus, by Lemma~\ref{lem:chernoff},
\[
\mathbb P(Z_{y,z}<Q\mid y\in S)
\le
\mathbb P\!\left(Z_{y,z}\le \frac{m_{y,z}}{18}\,\middle|\, y\in S\right)
\le
\exp\!\left(-\frac{(17/18)^2m_{y,z}}{2}\right)
\le
\exp(-\mu_\ell/8).
\]
This implies Claim \ref{c4.4}. \hfill$\blacksquare$

\medskip
Note that if \(w\in N_F(e_\lambda)\), then \(\dist_J(y,w)\le L+(2\ell+1)=6\ell+2\). Therefore, if \(E_{y,z}\) and \(E_{y',z'}\) are dependent, then \(\dist_J(y,y')\le12\ell+4\). Since \(\Delta(J)\le D\) and each event $E_{y,z}$ is independent of all but at most $4D^{12\ell+5}$ other events $E_{y',z'}$, we have $\Delta_{\mathcal E}\le 4D^{12\ell+5}$. By the assumption that $\pi \le \frac{1}{e(4D^{12\ell+5} + 1)}$ and Lemma~\ref{LLL}, \(\mathbb P(\text{no }E_{y,z}\text{ occurs})\ge (1-e\pi)^{D|J|}\). Since \(e\pi\le 1/2\), we further have
\((1-e\pi)^{D|J|}\ge \exp(-2e\pi D|J|)\). On the other hand, $|U'\cap S|\sim \mathrm{Bin}(|U'|,p)$ and \(\mathbb E|U'\cap S|=p|U'|\ge pc_0|J|\). Therefore, by Lemma~\ref{lem:chernoff},
\[
\mathbb P\left(|U'\cap S|<\frac12 pc_0|J|\right)
\le \exp(-pc_0|J|/8).
\]
Since \(2eD\pi<pc_0/8\), there exists an outcome for which no event \(E_{y,z}\) occurs and $|U'\cap S|\ge \frac12 pc_0|J|$. Fix such an outcome.

\begin{claim} \label{c4.5}
Let $B_H$ denote the set of $Q$-robustly $a$-branchable vertices of $H$. Then $|B_H|\ge \frac12 pc_0|J|$.
\end{claim}
It suffices to show that $U'\cap S\subseteq B_H$. Let \(y\in U'\cap S\). Since \(y\in U\), we can choose distinct neighbors \(z_1(y),\dots,z_a(y)\in N_J(y)\). For each \(i\in[a]\), define \(M_i(y):=\{y'\in N_H(y): z_i(y)\in V(P_{yy'})\}\). Since no event \(E_{y,z_i(y)}\) occurs, we have \(|M_i(y)|\ge Q\) for every \(i\). For any choice of $y_i\in M_i(y)$, \(P_{yy_1},\dots,P_{yy_a}\) are internally vertex-disjoint and pairwise anticomplete in \(T_y-y\). Since \(N_F(yy_i)\cap S=\{y,y_i\}\), \(P_{yy_i}\) has no vertices or neighbors in \(T_{y_j}\) for $j\neq i$. Thus, these paths are pairwise internally vertex-disjoint and pairwise anticomplete in \(J-y\). Therefore, \(y\) is \(Q\)-robustly \(a\)-branchable in \(H\). This proves Claim \ref{c4.5}.
\hfill$\blacksquare$

\medskip
Since each tree $T_w$ has size $|T_w| \le 2D^{2\ell}$ and at most one edge connects any two distinct trees, we have $\Delta(H) \le \Delta(H^*) \le D|T_w| \le 2D^{2\ell+1} \le D_0$. Consequently, the definition of $D_0$ guarantees that $|B_H| \ge |V(H)|/D_0$. Moreover, since no $E_{y,z}$ occurs, \(\delta(H)\ge Q= 9q^2\). Note that every cycle of length $t$ in $H$ gives a cycle in \(J\) of length at most $Lt$. Since \(g(J)\ge g_{s,\eta,D}\ge L(2m+2)\), the graph $H$ has girth at least $2m+2$. By Lemma~\ref{lem:core} (applied to \(H\) and \(B_H\)), there is a $q$-connected induced subgraph \(H'\subseteq H\) with $|V(H')|\ge 11q^2D_0^2$. Moreover, at least $|V(H')|/(2D_0)$ vertices \(y\in B_H\cap V(H')\) satisfy $d_{H'}(y)\ge d_H(y)-2q^2$; we call these \textit{good} vertices.

Let $\Gamma$ be the graph on the good vertices in which two vertices are adjacent if they are adjacent in \(H'\) or in \(J\). Since \(\Delta(H')\le D_0\) and \(\Delta(J)\le D\le D_0\), we have \(\Delta(\Gamma)\le 2D_0\). There are at least \(|V(H')|/(2D_0)\ge (11/2)q^2D_0\) good vertices, and hence \(\Gamma\) contains an independent set of size at least \(\frac{(11/2)q^2D_0}{2D_0+1}\geq s\), say \(v_1,\dots,v_s\).

\begin{claim}\label{c4.6}
$J$ contains an induced subdivision of $K_s$ whose branch vertices are \(v_1,\dots,v_s\).
\end{claim}

For each \(i\in[s]\), let \(M_1(v_i),\dots,M_{s-1}(v_i)\subseteq N_H(v_i)\) be witness sets in Definition \ref{def:robust}. Since \(d_{H'}(v_i)\ge d_H(v_i)-2q^2\), every witness set contains at least \(Q-2q^2=7q^2\) vertices of \(H'\). Thus, we can injectively assign these sets to the other branch vertices $v_j$ ($j\ne i$) and select pairwise distinct vertices $u_j^{(i)}\in N_{H'}(v_i)$ from them.

Let \(H'':=H'-\{v_1,\dots,v_s\}\). Since \(H'\) is \(q\)-connected, \(H''\) is \((q-s)\)-connected. Note that \(q-s=10(s-1)^2\ge 10\binom{s}{2}\). By Lemma~\ref{lem:TW}, the graph \(H''\) is \(\binom{s}{2}\)-linked. Consequently, there exist pairwise vertex-disjoint paths $Q_{ij}$ in $H''$ connecting $u_j^{(i)}$ and $u_i^{(j)}$ for all $1\le i<j\le s$. For \(1\le i<j\le s\), let \(Y_{ij}\) be the union of the paths \(P_{v_i u_j^{(i)}}\) and \(P_{v_j u_i^{(j)}}\), together with the paths \(P_e\) for all edges \(e\in E(Q_{ij})\). Let \(R_{ij}\) be a shortest \(v_i\),\(v_j\)-path in \(J[V(Y_{ij})]\). Then \(R_{ij}\) is induced.

We claim that \(J\big[\bigcup_{1\le i<j\le s}V(R_{ij})\big]\) is an induced subdivision of \(K_s\). Since $\{v_1,\dots,v_s\}$ is an independent set in $\Gamma$, we only show that there is no edge between distinct paths $R_{ij}$ and $R_{i'j'}$. Suppose such an edge $xy$ exists.
Choose edges \(f_x,f_y\in E(H)\) with \(x\in V(P_{f_x})\) and \(y\in V(P_{f_y})\). If \(f_x\) and \(f_y\) are vertex-disjoint, then the definition of \(H\) implies that \(P_{f_x}\) and \(P_{f_y}\) are anticomplete in \(J\), a contradiction. Hence \(f_x\) and \(f_y\) share an end vertex. Since the paths \(Q_{ij}\) are vertex-disjoint in \(H''\), the only possible shared end vertex is one of the branch vertices, say \(v_i\). Thus \(f_x=v_i u_j^{(i)}\) and \(f_y=v_i u_{j'}^{(i)}\) for
distinct \(j,j'\). By the robustness condition, the corresponding paths are internally vertex-disjoint and anticomplete in \(J-v_i\), again a contradiction. Finally, if $v_r$ is a branch vertex and $e\in E(H)$ is not incident with $v_r$, then $v_r\notin N_F(e)$. Thus, $v_r\notin V(P_e)$ and $v_r$ has no neighbor in $P_e$. Therefore, \(J\big[\bigcup_{1\le i<j\le s}V(R_{ij})\big]\) is an induced subdivision of $K_s$. \hfill$\blacksquare$

\medskip 
By Claim \ref{c4.6}, we obtain a contradiction. This completes the proof of the theorem.
\end{proof}

\begin{corollary}\label{prop:avg-mader}
Let \(g_1:=8\cdot10^6\). For every integer $d\ge4$, every graph $J$ satisfying
\(\Delta(J)<d^{43}\), \(d(J)>d-1+1/20\), and \(g(J)\ge g_1\) contains an induced subdivision of $K_{d+1}$.
\end{corollary}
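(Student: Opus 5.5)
The plan is to rerun the proof of Theorem~\ref{thm:avg-mader} with the explicit parameters $s:=d+1$, $\eta:=1/20$ and $D:=d^{43}$, and to check that every largeness requirement on $\ell$ and $m$ is met by choices for which $g_{s,\eta,D}=\max\{12\ell+5,L(2m+2)\}\le 8\cdot10^6$ holds uniformly in $d\ge4$. The conclusion of the corollary is exactly the conclusion of Theorem~\ref{thm:avg-mader}. The hypothesis $d(J)>d-1+1/20=s-2+\eta$ matches, and $D=d^{43}\ge s-1$. Simply quoting the theorem is not enough, because it only promises some girth bound depending on $s,\eta,D$, whereas here $D$ grows with $d$ and the required girth must not. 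So the real content is to trace the constants.

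With these choices, $\alpha=(d-1)/2+1/80$ and $\gamma=\alpha-1$. Since $d\ge4$, we get $\alpha\ge 3/2+1/80$ and $\gamma\ge 1/2$. The conditions on $\ell$ are
\[
\mu_\ell=\frac{\gamma\alpha^{\ell-3}}{e(C+1)}\ge 18Q,\qquad \pi=e^{-\mu_\ell/8}\le \frac{1}{e(4D^{12\ell+5}+1)},\qquad 2eD\pi<\frac{pc_0}{8},
\]
where $C=(4\ell+2)(D+1)$, $Q=9q^2$ with $q=10d^2+d+1$, and $c_0=\eta/(4(D-s+2)D^{2\ell})$. All the quantities on the right-hand sides are polynomial in $d$ of degree $O(\ell)$, since $D=d^{43}$. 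The key point is that $\mu_\ell$ grows like $\alpha^{\ell}$ with $\alpha\gtrsim d/2$, so it too is roughly $d^{\ell}$ up to the loss $2^{-\ell}$. Taking logarithms, the second condition reads $\gamma\alpha^{\ell-3}/(8e(C+1))\ge \log\bigl(e(4d^{43(12\ell+5)}+1)\bigr)\approx 516\,\ell\log d$. I would fix an absolute $\ell$, on the order of a few hundred, and then check the inequality in two regimes. For large $d$ the left side beats the right as soon as $\ell-4>1$, because the exponent of $d$ on the left is about $\ell-4$ against an essentially logarithmic right side. For small $d$, say $d=4$, one needs $(3/2)^{\ell}$ to dominate about $d^{43}\cdot\ell^2$ times constants, which forces $\ell$ of order $43\log 4/\log(1.5)\approx 150$ plus slack. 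The third condition is of the same type, with $\log(1/(pc_0))=O(\ell\log d)$.

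Next, $m$ must satisfy $2(q^2-1)^m\ge 11q^2D_0^2$ with $D_0\approx\max\{2/(pc_0),2D^{2\ell+1}\}$, which is roughly $d^{86\ell+O(1)}$. Since $q^2\ge d^4$, it suffices to take $m\approx(86\ell+O(1))/4+O(1)$, and this is absolute. The girth requirement then becomes $L(2m+2)\approx(4\ell+1)(43\ell+O(1))$, which must come out below $8\cdot10^6$. For $\ell\approx 300$ this is about $1.2\cdot10^3\cdot 1.3\cdot10^4\approx 1.5\cdot10^7$, slightly too large, so the balance needs care. One can refine by noting that for small $d$ the term $q^2\gg d^4$ helps, or by choosing $\ell$ as small as the $d=4$ case allows (around $170$). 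That gives about $680\cdot 7400\approx 5\cdot10^6<8\cdot10^6$.

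The main obstacle is this bookkeeping: verifying the $\ell$-conditions uniformly over all $d\ge4$, where the worst case is the smallest $d$ because of the weak base $\alpha$, while keeping $\ell$ small enough that $(4\ell+1)(2m+2)\le g_1$. I would first settle $d=4$ numerically, then show that the slack only increases with $d$, since both sides are of the form $d^{\Theta(\ell)}$ versus $\ell\log d$. Finally, I would confirm that the remaining side conditions in the proof, such as $\frac{(11/2)q^2D_0}{2D_0+1}\ge s$ and $q-s\ge10\binom s2$, hold automatically.
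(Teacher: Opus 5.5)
Your framework is the paper's: instantiate the proof of Theorem~\ref{thm:avg-mader} with $s=d+1$, $\eta=1/20$, $D=d^{43}$. Then verify $\mu_\ell\ge18Q$, $\mu_\ell/8\ge\max\{1+\log(4D^{12\ell+5}+1),\log(16eD/(pc_0))\}$, $2(q^2-1)^m\ge11q^2D_0^2$ and $(4\ell+1)(2m+2)<8\cdot10^6$. The gap is your plan to use \emph{one absolute} $\ell$ for all $d\ge4$. This cannot work, and two arithmetic slips hide it.

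First, take $d=4$, so $\alpha=121/80$ and $\log D=86\log 2$. There the condition $\mu_\ell/8\ge1+\log(4D^{12\ell+5}+1)$ fails for every $\ell\le201$. Besides $D$ itself, the factors $e(C+1)/D\approx e(4\ell+2)$, $1/\gamma$ and $8(12\ell+5)\log D$ together cost about $e^{22}$. That is roughly $54$ further powers of $\alpha$ beyond $\log D/\log\alpha\approx144$, so one needs $\ell-3\gtrsim 144+54$, and $\ell\approx170$ is far too small. (Relatedly, for large $d$ the exponent of $d$ in $\mu_\ell$ is about $\ell-45$, not $\ell-4$, because $C\approx(4\ell+2)d^{43}$.)

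Second, $D_0\ge 2/(pc_0)\approx160(4\ell+2)D^{2\ell+2}$ enters condition (ii) \emph{squared}. So the required $m$ is about $\log(D_0^2)/\log(q^2-1)$, which tends to $43(\ell+1)+1$ as $d\to\infty$, not to $86\ell/4$. What rescues $d=4$ is the true base $q^2\approx100d^4$. Its advantage over $d^4$ becomes negligible as $d$ grows.

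Hence your assertion that ``the slack only increases with $d$'' holds for the $\ell$-conditions but is false for the girth budget: for fixed $\ell$, the required $m$ grows with $d$. Concretely, $d=4$ forces $\ell\ge202$. Yet for $d=5$ and $\ell=202$, condition (ii) needs $m\ge5071$, giving $L(2m+2)\ge 809\cdot10144>8\cdot10^6$, and a larger $\ell$ only makes this worse.

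The missing idea is to let $\ell$ and $m$ depend on $d$, shrinking $\ell$ as $\alpha\approx d/2$ grows. This is what the paper does: $(\ell,m)=(205,4814)$ for $d=4$, $(136,3423)$ for $d=5$, and $(113,5000)$ for all $d\ge6$. Your monotonicity argument is valid in that last regime. With $\ell=113$ fixed, the $\ell$-conditions only improve as $d$ increases. The required $m$ stays below its limit $43\cdot114+1=4903<5000$, so the girth needed is at most $453\cdot10002<8\cdot10^6$.
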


\begin{proof}
We apply Theorem~\ref{thm:avg-mader} with \(s=d+1\), \(\eta=1/20\), and \(D=d^{43}\). Then \(a=s-1=d\). Let the parameters $q, Q, \alpha, \gamma, C, p, c_0, \mu_\ell$ and $D_0$ be defined as in Theorem \ref{thm:avg-mader}. Take \((\ell,m)=(205,4814)\) if \(d=4\), take \((\ell,m)=(136,3423)\) if \(d=5\), and take \((\ell,m)=(113,5000)\) if \(d\ge6\).
In each case, it is easily verified that $(4\ell+1)(2m+2) < 8\cdot10^6$ and the following conditions hold.
\begin{enumerate}
    \item[(i)] $\mu_\ell > 18Q$ and $\mu_\ell/8 > \max\{1+\log(4D^{12\ell+5}+1), \log(16eD/(pc_0))\}$;
    \item[(ii)] $2(q^2-1)^m \ge 11q^2D_0^2$.
\end{enumerate}
By Theorem~\ref{thm:avg-mader}, we can find an induced $K_{d+1}$-subdivision in $J$.
\end{proof}

\section{Proof of Theorem \ref{thm:main}}\label{sec:main-proof}
In this section, we prove our main result, i.e., Theorem \ref{thm:main}. Our approach is based on the techniques introduced by Gir\~{a}o and Hunter.
\begin{proof}
Let \(g_0:=8\cdot10^6\) and let $k\ge3$ be fixed. Let $G$ be a counterexample of Theorem~\ref{thm:main}. Let \(d:=\max_{\varnothing\neq X\subseteq V(G)}\delta(G[X])\) and pass to an induced subgraph attaining this maximum, again denoted by $G$. Since \(d\ge k\), any induced subdivision of \(K_{d+1}\) contains an induced subdivision of \(K_{k+1}\). Hence \(G\) contains no induced subdivision of \(K_{d+1}\). Let $n:=|G|$. Suppose \(d=3\). Then \(G\) does not contain an induced subdivision of $K_4$ and \(g(G)\ge5\). By Theorem~\ref{thm:TV-ISK4}, \(G\) has a vertex of degree at most \(2\), a contradiction to \(\delta(G)=3\). Hence \(d\ge4\).

Let \(B:=\{v\in V(G):d_G(v)\ge d^{43}\}\). Let \(\beta:=10^{-12}\). Since $G$ is $d$-degenerate, the sum of the degrees of $V(G)$ is at most $2dn$, so \(|B|\le 2n/d^{42}\). Let $A:=\{v\in V(G)\setminus B:d_B(v)\ge2\}$. If $|A|>60d^2|B|$, then by Lemma~\ref{lem:unbalanced}, we obtain an induced subdivision of $K_{d+1}$, a contradiction. Thus, we have $|A|\le 60d^2|B|$. Let $A':=\{v\in V(G)\setminus B:d_B(v)=1\}$. We consider the following two cases.

\medskip \noindent 
\textbf{Case 1. $|A'|\ge 81n/100$.}

\medskip \noindent By Lemma~\ref{lem:one-sided} (applied with $X=A'$ and $B_0=B$), we obtain disjoint sets $X'\subseteq A'$ and $Y\subseteq V(G)\setminus X'$ such that $|Y|\ge \beta n/(d^3(d+1))$, $Y$ is independent, $2\le |N(y)\cap X'|\le 3\cdot 10^7 d^4$ for all $y\in Y$, and $d_G(x')\le 800d$ for all $x'\in X'$. Set $Y_0:=Y\setminus(A\cup B)$. Since \(|A|+|B|\le (60d^2+1)|B|\le 2n(60d^2+1)/d^{42}<\beta n/(2d^3(d+1))\), we have $|Y_0|\ge \beta n/(2d^3(d+1))$.

Construct \(Y_1\subseteq Y_0\) greedily as follows. When a vertex \(y\in Y_0\) is chosen, fix two distinct vertices \(x_1^y,x_2^y\in N(y)\cap X'\), add \(y\) to \(Y_1\), and then delete every vertex of \(Y_0\) adjacent to \(x_1^y\) or \(x_2^y\). Since each \(x_i^y\) has degree at most \(800d\), we delete at most \(1600d\) vertices in each step. Hence \(|Y_1|\ge |Y_0|/(1600d)\). By construction, the triples \(P_y:=\{y,x_1^y,x_2^y\}\) for \(y\in Y_1\) are pairwise disjoint. Define a graph \(L\) on \(Y_1\) by joining \(y\) to \(y'\), with \(y\) chosen earlier than \(y'\), if some vertex in \(P_y\) is adjacent to \(x_1^{y'}\) or \(x_2^{y'}\). For each fixed \(y'\in Y_1\), there are at most \(1600d\) such earlier neighbors, so \(L\) is \(1600d\)-degenerate. Therefore \(L\) has an independent set \(Y_2\subseteq Y_1\) with $|Y_2|\ge \frac{|Y_0|}{1600d(1600d+1)}$. For any two distinct vertices $y_1,y_2 \in Y_2$, the triples \(P_{y_1}\) and \(P_{y_2}\) are anticomplete in \(G\).

For each \(y\in Y_2\), let \(b_1^y\) and \(b_2^y\) be the unique neighbors of $x_1^y$ and \(x_2^y\) in $B$, respectively. Then \(b_1^y\neq b_2^y\) and \(b_1^y b_2^y\notin E(G)\) by the girth assumption. Fix a \(d\)-degenerate ordering of \(B\), choose each vertex of \(B\) independently at random with probability \(q_B:=1/(2d)\), and let \(R\) be the set of chosen vertices with no chosen right-neighbor. Then \(R\) is independent. Call \(y\in Y_2\) \textit{good} if \(b_1^y,b_2^y\in R\) and \(N(y)\cap R=\varnothing\). Since \(y\notin A\cup B\), it has at most one neighbor in \(B\), and \(y\) is adjacent to neither \(b_1^y\) nor \(b_2^y\). Hence, since $q_B=1/(2d)$ and $d\geq 4$, a direct lower bound yields
$\Prb(y\text{ is good})\ge \frac{1}{4d^2}\left(1-\frac{1}{2d}\right)^{2d+1} \ge \frac{1}{14d^2}$.
Let $Z$ be the number of good vertices. Thus, we have
$\E Z\ge \frac{|Y_2|}{14d^2} \ge  \frac{\beta n}{44800d^6(1600d+1)(d+1)}>\frac{2n}{d^{42}}$
as $d\ge4$. On the other hand, \(\E(d|R|)\le dq_B|B|=|B|/2\le {n}/{d^{42}}\). Therefore there exists some choice of $R$ that satisfies \(Z>d|R|\).

Define an auxiliary graph \(H\) on vertex set \(R\) by adding the edge \(b_1^yb_2^y\) for each good \(y\in Y_2\). Note that \(H\) is simple: distinct good vertices yield distinct edges of \(H\). Indeed, if two distinct good vertices \(y,y'\) yield the same edge \(uv\), then the two anticomplete triples \(P_y\) and \(P_{y'}\), together with \(u\) and \(v\), would form an $8$-cycle in \(G\). Thus \(|E(H)|=Z>d|R|\) and \(d(H)>2d\). By Lemma~\ref{lem:avg-core}, \(H\) contains an induced subgraph \(H'\) with \(\delta(H')\ge d\).

For each edge \(e=uv\in E(H')\), let \(y_e\) be the unique good vertex corresponding to \(e\), and let \(S_e:=u\,x_u^e\, y_e\, x_v^e\, v\) be the corresponding \(u\),\(v\)-path, where \(x_u^e,x_v^e\in X'\) are the two chosen neighbors of \(y_e\) adjacent to \(u\) and \(v\), respectively. It is not hard to see that each \(S_e\) is induced. Moreover, if \(e\neq f\), then the internal vertices of \(S_e\) and \(S_f\) lie in two distinct sets \(P_{y_e}\) and \(P_{y_f}\), which are disjoint and anticomplete. Hence, the internal vertices of \(S_e\) and \(S_f\) are internally disjoint and anticomplete. If \(C\) is a cycle of length \(t\) in \(H'\), then replacing every edge \(e\in E(C)\) by \(S_e\) gives a cycle of length \(4t\) in \(G\). Thus \(g(H')\ge 27\), for otherwise \(G\) would contain a cycle of length less than \(108\). Therefore Theorem~\ref{KO-topological-clique} implies a subdivision \(T\) of \(K_{d+1}\) in \(H'\). Replacing every edge \(e\in E(T)\) by \(S_e\) gives an induced subdivision of \(K_{d+1}\) in \(G\), a contradiction.

\medskip \noindent 
\textbf{Case 2. $|A'|<81n/100$.}

\medskip \noindent 
Let \(G':=G-B\). Then \(\Delta(G')<d^{43}\) and \(g(G')\ge g(G)\). Every vertex of \(V(G)\setminus(A\cup A'\cup B)\) has degree at least \(d\) in \(G'\), while every vertex of \(A'\) has degree at least \(d-1\) in \(G'\). Therefore
\(
  2e(G')\ge d\bigl(n-|A|-|A'|-|B|\bigr)+(d-1)|A'|
  =d(n-|A|-|B|)-|A'|.
\)
Since \(|A'|<81n/100\) and \(|V(G')|\le n\), we obtain
\(d(G')>d-81/100-d(|A|+|B|)/n\). Thus, we have
\(
  d\frac{|A|+|B|}{n}\le d(60d^2+1)\frac{|B|}{n}\le \frac{2(60d^2+1)}{d^{41}}<1-81/100-1/20,
\)
and hence \(d(G')>d-1+1/20\). Now we apply Corollary~\ref{prop:avg-mader} to \(G'\). We have that \(G'\) contains an induced subdivision of \(K_{d+1}\), a contradiction. This completes the proof.
\end{proof}

\section{Concluding remarks}\label{sec:concluding}

The constant $8 \cdot 10^6$ in Theorem~\ref{thm:main} has not been strictly optimized, and the optimal value of $g_0$ remains unknown. However, we can show that $g_0\geq 7$. Let $L(PG(2, q))$ denote the incidence graph of the points and lines of the projective plane $PG(2, q)$. For any odd prime power $q \ge 5$, we can show that $L(PG(2, q))$ contains no induced subdivision of $K_{q+2}$. Indeed, the induced condition, together with the structural constraints of the subdivision,
forces all $q+2$ branch vertices to lie in the same partition class of the bipartite incidence graph. In $PG(2, q)$, this corresponds to a set of $q+2$ points such that no three are collinear, forming a $(q+2)$-arc. However, a classical theorem of Segre~\cite{Segre1955} establishes that $PG(2, q)$ contains no arc of size $q+2$ when $q$ is odd. Consequently, we have the following.
\begin{proposition}
For infinitely many integers \(d\), there exists a \(d\)-regular graph of girth six with no induced subdivision of \(K_{d+1}\).
\end{proposition}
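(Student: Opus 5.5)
We take $d=q+1$ and $G=L(PG(2,q))$ for an odd prime power $q\ge5$. Since each point lies on $q+1$ lines and each line contains $q+1$ points, $G$ is $(q+1)$-regular. It is bipartite, so it has no triangles. It has no $4$-cycles, since two points determine a unique line. A point, a line through it, another point on that line, a second line through the latter point, and a return to the start gives a $6$-cycle, so the girth is exactly six. There are infinitely many such $q$ (the odd primes at least $5$, say), hence infinitely many such $d$. It therefore remains to show that $G$ has no induced subdivision of $K_{q+2}=K_{d+1}$.

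Suppose $T\subseteq G$ is an induced subdivision of $K_{q+2}$. Each branch vertex has degree $q+1$ in $T$. Since $d_G=q+1$ and $T$ is induced, each branch vertex $v$ has $N_T(v)=N_G(v)$, that is, all its $G$-neighbours lie in $T$.

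\emph{Step 1: all branch vertices lie in the same class.} Suppose a point $P$ and a line $\ell$ are both branch vertices. If $P\in\ell$, then $P\ell$ is an edge of $T$, so it must be the subdivided edge joining them. Otherwise, choose a third branch vertex $w$. If $w$ is a point, let $m$ be the line through $P$ and $w$. If $w$ is a line, let $R$ be the intersection point of $w$ and $\ell$. Then $m\in N(P)$ and $m\in N(w)$, so $m$ is an internal vertex (or endpoint) of both the $T$-path leaving $P$ through $m$ and the path leaving $w$ through $m$. Since $m$ is a non-branch vertex of degree $2$ in $T$, these two paths coincide, so $m$ is an interior vertex of the $P$–$w$ path of length $2$. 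The same applies to $R$ on the $w$–$\ell$ path. Pushing this through, a non-branch vertex adjacent to three branch vertices gives a contradiction with degree $2$.

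The cleaner formulation is as follows. If two branch vertices $u,v$ of the same class have a common neighbour $x$ (every two points share a line, and every two lines share a point), then $x\in N_T(u)\cap N_T(v)$ has degree at least $2$ in $T$. So $x$ is the middle vertex of the path $u\,x\,v$, and in particular $x$ is adjacent to no other branch vertex. Now suppose there are $r$ point branch vertices and $t=q+2-r$ line branch vertices with $r,t\ge1$. If $r\ge2$, take two points $P,P'$ and a line $\ell$. The line $PP'$ is a middle vertex, so it differs from $\ell$, and it meets $\ell$ in a point $X$. Then $X\in N(\ell)$ and $X\in N(PP')$, and $PP'$ has $T$-degree exactly $2$ with neighbours $P,P'$. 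Since $T$ is induced and both $X$ and $PP'$ lie in $T$, the edge $X\text{--}PP'$ belongs to $T$. Hence $X\in\{P,P'\}$, so $\ell$ passes through $P$ or $P'$. Thus every line branch vertex passes through every pair chosen from the point branch vertices, which is impossible once $r\ge3$ with non-collinear points. Small cases are handled the same way using the symmetric argument on lines.

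This case bookkeeping, namely ruling out mixed configurations for every split $r+t=q+2$, is the main obstacle. I would handle it by the symmetric argument. If $r\ge2$ and $t\ge2$, two lines $\ell,\ell'$ must each pass through one of $P,P'$, and their intersection point is a middle vertex. This forces incidences that produce a short cycle or a degree violation. The extreme case $t=1$ (or $r=1$) requires $\ell$ to contain all but at most one point branch vertex. In that case two collinear points $P,P'$ on $\ell$ have the common neighbour $\ell$ itself, which is a branch vertex rather than a degree-$2$ middle vertex. This contradicts the fact that the unique $P$–$P'$ path must be $P\,\ell\,P'$ through a non-branch vertex. Dually, collinear triples are impossible.

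\emph{Step 2: reduction to an arc.} By Step 1 and duality, we may assume all $q+2$ branch vertices are points. For distinct branch points $P,P'$, the line $PP'$ is the middle vertex of their subdivided edge and has $T$-degree $2$. If a third branch point $P''$ lay on $PP'$, that line would have $T$-degree at least $3$. Hence no three branch points are collinear, so they form a $(q+2)$-arc in $PG(2,q)$. This contradicts Segre's theorem for odd $q$, and completes the proof.
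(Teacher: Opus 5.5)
Your overall route matches the paper's: saturation of branch-vertex neighbourhoods, reduction to branch vertices in one class, then a $(q+2)$-arc contradicting the odd-$q$ bound. Your regularity/girth check and Step~2 are correct. The gap is Step~1, which the paper only asserts and which your argument does not establish.

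Write $\mathcal P$ and $\mathcal L$ for the point and line branch vertices.
\begin{itemize}
\item Your ``cleaner formulation'' (a common neighbour $x$ of two same-class branch vertices is the middle of the path $u\,x\,v$) holds only when $x$ is not itself a branch vertex. In a mixed configuration the line $PP'$ may belong to $\mathcal L$. So ``the line $PP'$ is a middle vertex, so it differs from $\ell$'' is unjustified, and the ensuing ``non-collinear points'' contradiction is circular.
\item The case $r,t\ge2$ is dismissed with ``forces incidences that produce a short cycle or a degree violation'', which is not an argument.
\item Most seriously, the case $t=1$ rests on a false claim. Nothing forces the branch path between two branch points $P,P'\in\ell$ to be $P\,\ell\,P'$: when their common neighbour $\ell$ is a branch vertex, that path simply leaves $P$ along another line.
\end{itemize}
Consequently the configurations with $\mathcal L=\{\ell\}$ and $\mathcal P$ equal to all $q+1$ points of $\ell$, or to $q$ points of $\ell$ plus one point off $\ell$ (and their duals), are not excluded by anything you wrote. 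These are exactly the configurations in which most lines $PP'$ coincide with the branch vertex $\ell$.

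One way to close the gap is as follows.
\begin{itemize}
\item Suppose $P\in\mathcal P$, $\ell\in\mathcal L$ and $P\notin\ell$. For each non-branch line $m\ni P$, both $m$ and $m\cap\ell$ lie in $T$, as neighbours of $P$ and of $\ell$. Hence $N_T(m)=\{P,m\cap\ell\}$.
\item If $m\cap\ell\notin\mathcal P$, the branch path through $m$ is $P\,m\,(m\cap\ell)\,\ell$, and this happens for exactly one $m$. Since the $q+1$ lines through $P$ correspond bijectively to the other branch vertices, you get $\mathcal P\setminus\{P\}\subseteq\ell$. You also get that every line of $\mathcal L\setminus\{\ell\}$ passes through $P$, and that $PP'\notin\mathcal L$ for all $P'\in\mathcal P\setminus\{P\}$.
\item If $r,t\ge2$, apply this again to $P'\in\mathcal P\setminus\{P\}$ and $\ell'\in\mathcal L\setminus\{\ell\}$. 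These are non-incident since $\ell'\ni P$ and $\ell'\ne PP'$. Lines of $\mathcal L\setminus\{\ell,\ell'\}$ would pass through both $P$ and $P'$, and points of $\mathcal P\setminus\{P,P'\}$ would lie on $\ell\cap\ell'$. This forces $t=2$ and $r\le3$, which is impossible for $q\ge5$.
\item If all point--line branch pairs are incident, $r,t\ge2$ is impossible outright. So, up to duality, $\mathcal L=\{\ell\}$ and at most one point $X_0$ of $\ell$ is not a branch vertex.
\item In that case every line meeting $\ell$ in a branch point lies in $T$. So a non-branch point of $T$ off $\ell$ would have $T$-degree at least $q>2$.
\item The $\binom{q}{2}\ge2$ branch paths joining branch points on $\ell$ cannot pass through $\ell$, so each needs an internal non-branch point. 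Only $X_0$ is available, and these paths are internally disjoint, which is a contradiction.
\end{itemize}
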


If one considers graphs with large minimum degree, the girth requirement can be significantly reduced. For instance, using the same argument, we can show that if $d \ge 8500$, then every graph with minimum degree at least $d$ and girth at least $21000$ contains an induced subdivision of $K_{d+1}$.

Note that, in our proof of Theorem \ref{thm:main}, all induced subdivisions we find are proper (no two branch vertices are adjacent). This immediately leads to the following general statement, extending the result from complete graphs to the class of graphs with maximum degree at most $d$.
\begin{theorem}
For every fixed graph \(H\), every graph \(G\) with \(\delta(G)\ge |V(H)|-1\) and sufficiently large girth contains an induced subdivision of \(H\).
\end{theorem}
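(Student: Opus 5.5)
The plan is to run the proof of Theorem~\ref{thm:main} again with $d:=|V(H)|-1$ and a girth bound depending on $H$. The key observation is that the proof never produces an arbitrary subdivision of $K_{d+1}$. In every case it produces an induced subgraph built from a fixed family of internally disjoint, pairwise anticomplete induced paths, whose branch vertices form an independent set. So instead of asking for an induced subdivision of $H$ directly, I will show that the same constructions give an induced subdivision of $K_{d+1}$ with no two branch vertices adjacent. From that, an induced subdivision of $H$ follows immediately.

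\emph{Reduction.} Let $S$ be an induced subdivision of $K_{d+1}$ in $G$ with branch vertices $v_1,\dots,v_{d+1}$ and branch paths $R_{ij}$, where each $R_{ij}$ has at least one internal vertex. Identify $V(H)$ with $\{v_1,\dots,v_{d+1}\}$ and keep exactly the paths $R_{ij}$ with $v_iv_j\in E(H)$. Removing the internal vertices of the other paths leaves an induced subgraph of $S$, hence of $G$. Any edge of $G$ among the remaining vertices is an edge of $S$. Because the branch vertices are pairwise nonadjacent, such an edge must lie on one of the kept paths, so no spurious edges appear. The result is therefore an induced subdivision of $H$. (Since $H$ has at most $d+1$ vertices, no maximum-degree condition on $H$ is needed.)

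\emph{Properness in each case.} I will reuse the proof of Theorem~\ref{thm:main}: pass to the maximal-min-degree induced subgraph with $d'\ge d$ and find a proper induced subdivision of $K_{d'+1}$; deleting branch vertices leaves a proper induced subdivision of $K_{d+1}$.
\begin{itemize}
\item In Lemma~\ref{lem:unbalanced}, the branch vertices lie in the independent set $R$.
\item In Case~1, they again lie in $R$, which is independent.
\item In Case~2, Claim~\ref{c4.6} chooses $v_1,\dots,v_s$ independent in $\Gamma$, which includes $J$-adjacency.
\end{itemize}
Each construction therefore yields a proper subdivision.

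\emph{The case $d'=3$, the main obstacle.} Here Theorem~\ref{thm:TV-ISK4} yields an induced subdivision of $K_4$ that need not be proper. Three ways to handle it:
\begin{itemize}
\item If $d\le 2$, then $H$ is a subgraph of $K_3$, and an induced cycle (which exists, e.g.\ a shortest cycle) already gives an induced subdivision of $H$ or of any of its subgraphs.
\item If $d=3$ and $H$ is a proper subgraph of $K_4$, start from an induced subdivision of $K_4$. Adjacent branch vertices $v_i,v_j$ have $R_{ij}$ equal to a single edge. Every $H\subseteq K_4$ missing an edge can still be embedded by mapping that missing pair to a pair whose path is long, or by rerouting through a subdivided vertex. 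To make this work, I will first apply girth: a $K_4$-subdivision has at most three short branch paths, since four vertices pairwise adjacent would create a triangle or $C_4$. The remaining step is a short case analysis over the subgraphs of $K_4$.
\item If $H=K_4$, the embedding is the subdivision itself, so properness is not needed.
\end{itemize}

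I expect this small-case bookkeeping to be the only delicate part. Everything else is inherited verbatim from the proof of Theorem~\ref{thm:main}, with girth at least $g_0$.
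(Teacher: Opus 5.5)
Your main argument is the paper's own. The paper justifies this theorem only by remarking that every induced subdivision of \(K_{d+1}\) built in the proof of Theorem~\ref{thm:main} is proper; one then discards the interiors of the branch paths not corresponding to edges of \(H\), exactly as in your reduction. Your check of properness in Lemma~\ref{lem:unbalanced}, Case~1 and Claim~\ref{c4.6} is correct. You are also right that this remark does not cover \(d'=3\). There Theorem~\ref{thm:TV-ISK4} is used as a black box and gives no control over adjacency of branch vertices, a point the paper passes over.

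Your treatment of that case, however, is only a sketch, and its main recipe is not enough. The length-one branch paths form a forest in \(K_4\); this is the correct form of your girth argument. So they may form a star at one branch vertex \(a\). Then the branch paths of length at least \(2\) are exactly the three joining the other branch vertices. Consequently no labelling sends all non-edges of \(H\) to such paths whenever the complement of \(H\) does not fit in a triangle, e.g.\ for \(H=C_4\), \(P_4\), \(K_3+K_1\), or any \(4\)-vertex graph with at most two edges.

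These cases have to be built directly, which is easy when \(\delta(G)\ge3\) and the girth is large:
\begin{itemize}
\item A shortest cycle \(C\) is induced and contains an induced subdivision of every \(4\)-vertex \(H\) with \(\Delta(H)\le2\) other than \(K_3+K_1\).
\item For \(K_3+K_1\), take \(c\in C\), a neighbour \(x\notin C\) of \(c\), and a neighbour \(y\ne c\) of \(x\). Once the girth exceeds \(6\), \(x\) has no other neighbour on \(C\) and \(y\) has none at all, so \(C\cup\{y\}\) works.
\item \(C\cup\{x\}\) gives the paw.
\item A vertex with three neighbours gives \(K_{1,3}\).
\item Deleting the interior of a branch path of length at least \(2\) from the induced \(K_4\)-subdivision gives the diamond.
\end{itemize}

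Finally, your first bullet gets its cycle only from \(\delta(G)\ge2\), so it handles \(d=2\) but not \(d=1\). For \(d=1\) the case \(H=K_2\) is trivial, while \(H=\overline{K_2}\) with \(G=K_2\) is a degenerate counterexample to the statement as written.
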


\end{document}